\title{Short interval results for a class of arithmetic functions}
\author{Olivier Bordellès}
\address{2 allée de la combe \\ 43000 Aiguilhe \\ France}
\email{borde43@wanadoo.fr}
\date{}
\dedicatory{}
\newcommand{\Z}{\mathbb {Z}}
\newcommand{\R}{\mathbb {R}}
\newcommand{\C}{\mathbb {C}}
\DeclareMathOperator{\id}{Id}
\theoremstyle{plain}
\newtheorem{theorem}{Theorem}
\newtheorem{corollary}[theorem]{Corollary}
\newtheorem{lemma}[theorem]{Lemma}
\newtheorem{proposition}[theorem]{Proposition}
\theoremstyle{definition}
\theoremstyle{remark}
\newtheorem{remark}[theorem]{Remark}
\begin{document}

\begin{abstract}
Using estimates on Hooley's $\Delta$-function and a short interval version of the celebrated Dirichlet hyperbola principle, we derive an asymptotic formula for a class of arithmetic functions over short segments. Numerous examples are also given.
\end{abstract}

\subjclass[2010]{11A25, 11N37, 11L07}
\keywords{Short sums, Dirichlet hyperbola principle}

\maketitle

\thispagestyle{myheadings}
\font\rms=cmr8 
\font\its=cmti8 
\font\bfs=cmbx8

\section{Introduction and result}
\label{s1}

\noindent
Studying the behaviour of arithmetic functions in short intervals is a quite long-standing problem in number theory. By 'short intervals' we mean the study of sums of the shape
$$\sum_{x < n \leqslant x+y} F(n)$$
where $y=o(x)$ as $x \longrightarrow \infty$. \\ 

\noindent
One of the first results to be published dealing with this problem was Ramachandra's benchmarking paper \cite{ram} revisited by K\'{a}tai \& Subbarao \cite{kat}. More recently, Cui \& Wu \cite{cui} derived a short interval version of the Selberg-Delange method, developed by Selberg and Delange between 1954 and 1971 in order to provide a quite general theorem giving the right order of magnitude of the usual arithmetic functions. \\

\noindent
For multiplicative functions $F$ such that $F(p)$ is close to $1$ for every prime $p$, another method was developed in \cite{bor} using profound theorems from Filaseta-Trifonov and Huxley-Sargos results on integer points near certain smooth curves. This leads to very precise estimates of a large class of multiplicative functions. \\

\noindent
In this work, we derive asymptotic results for short sums of arithmetic functions $F$ such that there exist $s \in \Z_{\geqslant 0}$, $m \in \Z_{\geqslant 1}$ and real numbers $\kappa \in \left[ 0,1 \right)$, $\beta \geqslant 0$, $\delta \geqslant 0$ and $A >0$ such that
\begin{eqnarray}
   & & \sum_{n \leqslant z} \left( F \star \mu \right) (n) = z \sum_{j=0}^s a_j (\log z)^j + O \left( z^\kappa (\log ez)^\beta \right) \quad \left(z \geqslant 1, \ a_j \in \C, \ a_s \neq 0 \right ) \label{e1} \\
   & & \notag \\
   & & \left | \left( F \star \mu \right) (n) \right | \leqslant A \, \tau_m(n) \, (\log e n)^\delta \quad \left( n \in \Z_{\geqslant 1} \right) \label{e2}
\end{eqnarray}
where, as usual, $\mu$ is the M\"{o}bius function, $\tau_m$ is the $m$th Dirichlet-Piltz divisor function and $f \star g$ is the Dirichlet convolution product of the arithmetic functions $f$ and $g$ defined by
$$(f \star g) (n) = \sum_{d \mid n} f(d) g(n/d).$$

\noindent
Our main result may be stated as follows.

\begin{theorem}
\label{t1}
Let $e^e < y \leqslant x$ be real numbers and $F$ be an arithmetic function satisfying \eqref{e1} and \eqref{e2}. Then, for any $\varepsilon \in \left(  0,\frac{1}{2} \right]$ and $x^{1/2} e^{-\frac{1}{2}(\log x)^{1/4}} \leqslant y \leqslant x e^{-(\log x)^{1/4}}$  
\begin{eqnarray*}
   \sum_{x < n \leqslant x+y} F(n) &=& \frac{y a_s}{s+1} (\log x)^{s+1} + O \Bigl (  xy^{\kappa-1} \, e^{(\kappa -1) (\log x)^{1/4}} \, \left( \log x \right)^\beta  \\
   & &   {} + y (\log x)^{\max\left (s,\delta + m - \frac{1}{2} + \epsilon_{m+1} (x) \right )} + x^{ \varepsilon} \Bigr ).
\end{eqnarray*}
The implied constant depends only on $A,\varepsilon,m,s,a_0,\dotsc,a_s$ and on the implied constant arising in \eqref{e1}. For any integer $r \geqslant 2$, the function $\epsilon _r (x) = o(1)$ as $x \longrightarrow \infty$ is given in \eqref{e3} below.
\end{theorem}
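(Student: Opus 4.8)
The plan is to reduce the problem to a short-interval form of the Dirichlet hyperbola principle. Write $g:=F\star\mu$ and $G(z):=\sum_{n\leqslant z}g(n)$; then, with $\mathbf{1}$ the function identically equal to $1$, we have $F=g\star\mathbf{1}$, so that $F(n)=\sum_{d\mid n}g(d)$ and
\[
\sum_{x<n\leqslant x+y}F(n)=\sum_{\substack{(d,e)\in\Z_{\geqslant 1}^2\\ x<de\leqslant x+y}}g(d).
\]
I would take the cut-off $D:=y\,e^{(\log x)^{1/4}}$ and put $E:=(x+y)/D$; the hypothesis $x^{1/2}e^{-\frac12(\log x)^{1/4}}\leqslant y\leqslant x e^{-(\log x)^{1/4}}$ is precisely what forces $1<E\leqslant\sqrt{x+y}\leqslant D\leqslant x$, so that in every factorisation $n=de$ with $x<n\leqslant x+y$ one has $d\leqslant D$ or $e\leqslant E$. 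Splitting the divisors of $n$ at $D$ and re-indexing the part $d>D$ by $e=n/d$ (which then satisfies $e<E$) produces the exact identity
\[
\sum_{x<n\leqslant x+y}F(n)=\underbrace{\sum_{d\leqslant D}g(d)\Bigl(\Bigl\lfloor\tfrac{x+y}{d}\Bigr\rfloor-\Bigl\lfloor\tfrac{x}{d}\Bigr\rfloor\Bigr)}_{\Sigma_1}+\underbrace{\sum_{e< E}\Bigl(G\bigl(\tfrac{x+y}{e}\bigr)-G\bigl(\max\bigl(\tfrac{x}{e},D\bigr)\bigr)\Bigr)}_{\Sigma_2}.
\]

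For $\Sigma_2$ I would insert \eqref{e1} in the shape $G(z)=zP(\log z)+O\bigl(z^\kappa(\log ez)^\beta\bigr)$ with $P(t)=\sum_{j=0}^s a_j t^j$. On the main range $e\leqslant x/D$ one has $\max(x/e,D)=x/e$, and the contribution of the remainder is $\ll(\log x)^\beta x^\kappa\sum_{e\leqslant E}e^{-\kappa}\ll(\log x)^\beta x^\kappa E^{1-\kappa}\ll(\log x)^\beta x D^{\kappa-1}=xy^{\kappa-1}e^{(\kappa-1)(\log x)^{1/4}}(\log x)^\beta$, which is the first error term of the statement. Using $\frac{d}{dw}\bigl(wP(\log w)\bigr)=(P+P')(\log w)$ together with Abel summation over $e$, the main part of $\Sigma_2$ becomes $y\bigl(R(\log x)-R(\log D)\bigr)+O\bigl(y(\log x)^s\bigr)$, where $R$ is a primitive of $P+P'$, hence a polynomial of degree $s+1$ with leading coefficient $a_s/(s+1)$. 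The residual range $x/D<e<E$ has length $y/D=e^{-(\log x)^{1/4}}<1$, so it contains at most one integer, whose term is $\ll\frac{Dy}{x}(\log x)^s+D^\kappa(\log D)^\beta$ and is absorbed into the errors already listed.

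For $\Sigma_1$ I would write $\lfloor\frac{x+y}{d}\rfloor-\lfloor\frac{x}{d}\rfloor=\frac{y}{d}+\rho_d$ with $|\rho_d|<1$. Abel summation from \eqref{e1} gives $\sum_{d\leqslant D}g(d)/d=\widetilde P(\log D)+O\bigl(D^{\kappa-1}(\log D)^\beta\bigr)$, where $\widetilde P$ has degree $s+1$ and leading coefficient $a_s/(s+1)$, so $\Sigma_1=y\widetilde P(\log D)+\sum_{d\leqslant D}g(d)\rho_d+(\text{absorbed})$. The decisive algebraic point is that $\widetilde P(\log D)+R(\log x)-R(\log D)$ has leading term $\frac{a_s}{s+1}(\log x)^{s+1}$ — the powers $(\log D)^{s+1}$ cancel — with all remaining terms $O\bigl((\log x)^s\bigr)$; thus $\Sigma_1+\Sigma_2$ already yields the announced main term together with the errors $O\bigl(y(\log x)^s\bigr)$ and $O\bigl(xy^{\kappa-1}e^{(\kappa-1)(\log x)^{1/4}}(\log x)^\beta\bigr)$. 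It remains to bound $\sum_{d\leqslant D}g(d)\rho_d$, which I would split at $d=y$. For $d\leqslant y$, \eqref{e2} gives the crude bound $\bigl|\sum_{d\leqslant y}g(d)\rho_d\bigr|\leqslant\sum_{d\leqslant y}|g(d)|\ll y(\log y)^{m-1+\delta}$, which is $\leqslant y(\log x)^{\delta+m-\frac12+\epsilon_{m+1}(x)}$. For $y<d\leqslant D$ the count $\lfloor\frac{x+y}{d}\rfloor-\lfloor\frac{x}{d}\rfloor$ is $0$ or $1$, and is $1$ only if $d$ has a multiple in $(x,x+y]$; hence by \eqref{e2}
\[
\Bigl|\sum_{y<d\leqslant D}g(d)\rho_d\Bigr|\ \leqslant\ A(\log x)^\delta\sum_{x<n\leqslant x+y}\ \sum_{\substack{d\mid n\\ y<d\leqslant D}}\tau_m(d)\ +\ y\sum_{y<d\leqslant D}\frac{|g(d)|}{d},
\]
and, since $\log D-\log y=(\log x)^{1/4}$, the last sum is $\ll y(\log x)^\delta\bigl((\log D)^m-(\log y)^m\bigr)\ll y(\log x)^{\delta+m-\frac34}$.

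The double sum $\sum_{x<n\leqslant x+y}\sum_{d\mid n,\,y<d\leqslant D}\tau_m(d)$ is the heart of the matter: it is the $\tau_m$-weighted count of the divisors of $n$ lying in a window of \emph{logarithmic} width $(\log x)^{1/4}$, summed over the short interval. Bounding the inner sum by $\lceil(\log x)^{1/4}\rceil$ times a $\tau_m$-weighted Hooley $\Delta$-function of $n$, and invoking the short-interval mean-value estimate for that function — the auxiliary input of the paper, and precisely the source of the quantity $\epsilon_{m+1}(x)=o(1)$ of \eqref{e3} — would give $\ll y(\log x)^{m-\frac12+\epsilon_{m+1}(x)}$ for the double sum, hence $\ll y(\log x)^{\delta+m-\frac12+\epsilon_{m+1}(x)}$ after restoring the factor $(\log x)^\delta$; the residual $x^\varepsilon$-sized boundary terms are collected into the last term of the statement. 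Assembling $\Sigma_1+\Sigma_2$ with these bounds gives the formula, and since every step above relies only on \eqref{e1}, \eqref{e2} and elementary summation, the implied constant depends exactly on the data listed. I expect the genuine obstacle to be this final step: obtaining, uniformly over the admissible range of $y$, the short-interval saving from the trivial exponent $m$ down to $m-\frac12+\epsilon_{m+1}(x)$ for the $\tau_m$-weighted count of divisors in a logarithmically narrow window — which is exactly what the estimates on Hooley's $\Delta$-function are designed to provide.
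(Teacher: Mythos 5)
Your proposal is correct in substance, and for the decisive error estimate it takes a genuinely different, more elementary route than the paper. The skeleton is the same: you split at the same point $D=y\,e^{(\log x)^{1/4}}$ that the paper takes for $T$ (the paper via its short-interval hyperbola lemma, Lemma~\ref{le3}, you by a direct exact splitting), and your two main-term computations, with the cancellation of the $(\log D)^{s+1}$ terms, coincide with the paper's treatment of $S_1$ and $S_3$ in \eqref{e5}--\eqref{e6}. The difference lies in the remainder over $y<d\leqslant D$: the paper keeps the differences $\psi\left(\frac{x+y}{d}\right)-\psi\left(\frac{x}{d}\right)$, expands $\psi$ into a truncated Fourier series (Lemma~\ref{le2}) and controls the resulting sums of $\tau_m(n)\min\left(1,\frac{1}{H\|z/n\|}\right)$ with Proposition~\ref{pro1}, whereas you bound $\left\lfloor\frac{x+y}{d}\right\rfloor-\left\lfloor\frac{x}{d}\right\rfloor$ directly, re-index it as the $\tau_m$-weighted count of divisors of $n\in(x,x+y]$ lying in the window $(y,D]$, and appeal to the Hooley input; the step you single out as the crux is supplied verbatim by the paper's Lemma~\ref{le5} combined with Corollary~\ref{cor1}, so there is no real gap. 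Both routes therefore rest on the same $\Delta_{m+1}$ estimates, but yours dispenses with exponential sums altogether, and it is in fact slightly sharper: each of the $O\left((\log x)^{1/4}\right)$ dyadic blocks contributes only $(\log x)^{\delta+m-1}\left\lbrace y(\log x)^{\epsilon_{m+1}(x)}+x^{\varepsilon}\right\rbrace$, so you end up with the exponent $\delta+m-\frac{3}{4}+\epsilon_{m+1}(x)$ rather than the claimed (and stated) $\delta+m-\frac{1}{2}+\epsilon_{m+1}(x)$, avoiding the extra factor $\log H\asymp(\log x)^{1/4}$ lost in Proposition~\ref{pro1}; this is consistent, since the paper's Fourier expansion is only exploited through $\left|\sum_{h\leqslant H}e(hz/n)\right|\leqslant\min\left(H,\|z/n\|^{-1}\right)$, i.e.\ no genuine oscillation is used there. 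Two small points to tidy in a full write-up: the Abel summation for $\sum_{d\leqslant D}g(d)/d$ also produces a bounded constant term (harmless, absorbed in $y(\log x)^{s}$ after multiplication by $y$), and each use of the short-interval bound for $\Delta_{m+1}$ produces an $x^{\varepsilon}$ multiplied by powers of $\log x$, which you absorb by renaming $\varepsilon$, exactly as the paper does.
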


\noindent
It should be mentioned that if the error term in \eqref{e1} is of the form $O \left( z^{\kappa + \epsilon} \right)$ instead of $O \left( z^\kappa (\log ez)^\beta \right)$, then the proof shows that the term $xy^{\kappa-1} \, e^{(\kappa -1) (\log x)^{1/4}} \, \left( \log x \right)^\beta$ has to be replaced by
$$xy^{\kappa-1+ \varepsilon} \, e^{(\kappa -1 + \varepsilon) (\log x)^{1/4}}$$
the other terms remaining unchanged.

\section{Notation}
\label{s2}

\noindent
In what follows, $e^e < y \leqslant x$ are large real numbers, $N \geqslant 1$ is a large integer and $\varepsilon >0$ is an arbitrary small real number which does not need to be the same at each  occurence. \\

\noindent
For any $t \in \R$, $\left \lfloor t \right \rfloor$ is the integer part of $t$, $\psi \left( t \right) = t - \left \lfloor t \right \rfloor - \frac{1}{2}$ is the first Bernoulli function, $e(t) = e^{2 \pi i t}$ and $\| t \|$ is the distance from $t \in \R$ to its nearest integer. \\

\noindent
For any integer $r \geqslant 2$ and any real number $x > e^e$, we set
\begin{equation}
   \epsilon _r (x) = \sqrt{\frac{r \log \log \log x}{\log \log x}} \left( r-1+\frac{30}{\log \log \log x}\right). \label{e3}
\end{equation}

\noindent
Besides the arithmetic functions already listed above, let $\Delta_r$ be the $r$th Hooley's divisor function defined by
$$\Delta_r (n) = \underset{u_1,\dotsc,u_{r-1} \in \R}{\max} \sum_{\substack{d_1 d_2 \dotsb d_{r-1} \mid n \\ e^{u_i} < d_i \leqslant e^{u_i + 1}}} 1$$
where $r \geqslant 2$ is a fixed integer. \\

\noindent
Finally, let $F$ be any arithmetic function and let $f = F \star \mu$ be the Eratosthenes transform of $F$. We set
\begin{equation}
   \Sigma_F(N,x,y) := \sum_{N < n \leqslant 2N} f(n) \left( \psi \left( \frac{x+y}{n} \right) - \psi \left( \frac{x}{n} \right) \right) \quad \left( y < N \leqslant x \right). \label{e4}
\end{equation}

\section{First technical tools}
\label{s3}

\begin{lemma}
\label{le1} 
Let $F$ be any arithmetic function satisfying \eqref{e1}, and $f = F \star \mu$ be the Eratosthenes transform of $F$. Then, for any real numbers $1 \leqslant t \leqslant z$
$$\sum_{z < n \leqslant z+t} f(n) = t a_s (\log z)^s + O \left\lbrace  t (\log ez)^{s-1} + z^\kappa (\log ez)^\beta \right\rbrace.$$
\end{lemma}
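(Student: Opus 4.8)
The plan is to write the short sum as a difference $S(z+t)-S(z)$ of partial sums, substitute the asymptotic \eqref{e1} at the two endpoints, and then separate the contribution of the polynomial main term from that of the error term in \eqref{e1}.

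Concretely, put $Q(X)=\sum_{j=0}^{s}a_jX^j$ and $S(z)=\sum_{n\leqslant z}f(n)$, so that $\sum_{z<n\leqslant z+t}f(n)=S(z+t)-S(z)$. Applying \eqref{e1} at $z$ and at $z+t$ gives
\[
\sum_{z<n\leqslant z+t}f(n)=(z+t)Q(\log(z+t))-zQ(\log z)+O\bigl(z^{\kappa}(\log ez)^{\beta}+(z+t)^{\kappa}(\log e(z+t))^{\beta}\bigr).
\]
Since $1\leqslant t\leqslant z$ one has $z\leqslant z+t\leqslant 2z\leqslant ez$, whence $(z+t)^{\kappa}(\log e(z+t))^{\beta}\ll z^{\kappa}(\log ez)^{\beta}$ and the two error terms coalesce into $O(z^{\kappa}(\log ez)^{\beta})$, which is admissible.

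It then remains to expand $P(z+t)-P(z)$ with $P(u)=uQ(\log u)$, and I would carry this out one monomial at a time. Write $L=\log z$ and $\eta=\log(1+t/z)\in[0,t/z]\subseteq[0,1]$, so that $\log(z+t)=L+\eta$ and, for $1\leqslant j\leqslant s$,
\[
(z+t)(L+\eta)^{j}-zL^{j}=tL^{j}+(z+t)\bigl((L+\eta)^{j}-L^{j}\bigr),
\]
while the term $j=0$ contributes exactly $a_0 t$. Using
\[
(L+\eta)^{j}-L^{j}=\int_{L}^{L+\eta}jx^{j-1}\,dx\leqslant j\eta(L+\eta)^{j-1}\leqslant j(t/z)(\log ez)^{j-1}
\]
together with $z+t\leqslant 2z$, the second summand above is $O(t(\log ez)^{j-1})$. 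Summing against the coefficients $a_j$, the monomial $j=s$ produces the main term $a_s t(\log z)^{s}$ plus an error $O(t(\log ez)^{s-1})$, while every monomial with $j\leqslant s-1$ is $O(t(\log ez)^{j})=O(t(\log ez)^{s-1})$. Collecting these estimates yields the claimed formula, with an implied constant depending only on $s$, $a_0,\dots,a_s$, $\kappa$, $\beta$ and on the constant occurring in \eqref{e1}.

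There is no genuine obstacle here: the computation is elementary bookkeeping. The only two points worth a little attention are that (i) $\log z$ may be small — indeed it vanishes at $z=1$ — which is why the error term is phrased with $\log ez$, and correspondingly one should bound $(L+\eta)^{j-1}$ by $(\log ez)^{j-1}$ rather than by $L^{j-1}$; and (ii) one must make sure that the entire contribution of the lower-degree coefficients $a_0,\dots,a_{s-1}$, main terms and errors alike, is absorbed into $O(t(\log ez)^{s-1})$, so that nothing of size $t(\log ez)^{s}$ survives besides $a_s t(\log z)^{s}$.
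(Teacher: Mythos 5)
Your argument is correct and is exactly the ``usual argument'' the paper alludes to (the paper's proof is left to the reader): write the short sum as $S(z+t)-S(z)$, apply \eqref{e1} at both endpoints, and expand $(z+t)Q(\log(z+t))-zQ(\log z)$, noting $z+t\leqslant 2z$ so the two error terms merge into $O(z^{\kappa}(\log ez)^{\beta})$. Your bookkeeping of the monomials, including the use of $\log ez$ to handle small $z$ and the absorption of the lower-degree terms into $O(t(\log ez)^{s-1})$, is accurate.
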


\begin{proof}
Follows from \eqref{e1} and usual arguments. We leave the details to the reader.
\end{proof}

\begin{lemma}
\label{le2} 
Let $F$ be any arithmetic function and $f = F \star \mu$ be the Eratosthenes transform of $F$. Then, for any integers $N \in \left( y,x \right]$ and $H \geqslant 1$
\begin{eqnarray*}
   \Sigma_F(N,x,y) &=& - \int_x^{x+y} \sum_{0 < \left| h \right| \leqslant H} \sum_{N < n \leqslant 2N} \frac{f(n)}{n} \, e \left( \frac{ht}{n} \right) \, \mathrm{d}t \\
   & & {} + O \left( \max_{x \leqslant z \leqslant x+y} \sum_{N < n \leqslant 2N} \left | f(n) \right | \min \left( 1,\frac{1}{H \left\| z/n\right\| }\right) \right)
\end{eqnarray*}
where $\Sigma_F(N,x,y)$ is given in \eqref{e4}.
\end{lemma}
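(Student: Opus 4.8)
The plan is to substitute the classical truncated Fourier expansion of the first Bernoulli function into each of $\psi\!\left(\frac{x+y}{n}\right)$ and $\psi\!\left(\frac{x}{n}\right)$ separately, and then to recognise the resulting finite exponential sum as an integral over $[x,x+y]$. Recall the well-known estimate: for every positive integer $H$ and every real $u$,
$$\psi(u) = - \sum_{0 < |h| \leqslant H} \frac{e(hu)}{2\pi i h} + O\!\left( \min\!\left( 1, \frac{1}{H \|u\|} \right) \right),$$
with an absolute implied constant. Applying this twice, with $u = (x+y)/n$ and with $u = x/n$, and subtracting, one gets
$$\psi\!\left( \frac{x+y}{n} \right) - \psi\!\left( \frac{x}{n} \right) = - \sum_{0 < |h| \leqslant H} \frac{1}{2\pi i h} \left( e\!\left( \frac{h(x+y)}{n} \right) - e\!\left( \frac{hx}{n} \right) \right) + O\!\left( \min\!\left( 1, \frac{1}{H \|(x+y)/n\|} \right) + \min\!\left( 1, \frac{1}{H \|x/n\|} \right) \right).$$

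Next I would turn the main term into the claimed integral. Since $\frac{\mathrm{d}}{\mathrm{d}t}\, e(ht/n) = \frac{2\pi i h}{n}\, e(ht/n)$, the fundamental theorem of calculus gives $e\!\left(\frac{h(x+y)}{n}\right) - e\!\left(\frac{hx}{n}\right) = \frac{2\pi i h}{n} \int_x^{x+y} e\!\left(\frac{ht}{n}\right) \mathrm{d}t$, whence
$$\frac{1}{2\pi i h} \left( e\!\left( \frac{h(x+y)}{n} \right) - e\!\left( \frac{hx}{n} \right) \right) = \frac{1}{n} \int_x^{x+y} e\!\left( \frac{ht}{n} \right) \mathrm{d}t.$$
Multiplying the displayed identity for $\psi\!\left(\frac{x+y}{n}\right) - \psi\!\left(\frac{x}{n}\right)$ by $f(n)$, summing over $N < n \leqslant 2N$, and interchanging the two finite sums (over $n$ and over $h$) with the integral — legitimate because only finitely many terms occur — transforms the main term into exactly $- \int_x^{x+y} \sum_{0 < |h| \leqslant H} \sum_{N < n \leqslant 2N} \frac{f(n)}{n}\, e\!\left(\frac{ht}{n}\right) \mathrm{d}t$, as desired.

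Finally I would bound the error contribution. After multiplying by $|f(n)|$ and summing over $n$, it is
$$\ll \sum_{N < n \leqslant 2N} |f(n)| \min\!\left( 1, \frac{1}{H \|(x+y)/n\|} \right) + \sum_{N < n \leqslant 2N} |f(n)| \min\!\left( 1, \frac{1}{H \|x/n\|} \right),$$
and since both $x$ and $x+y$ lie in $[x,x+y]$, each of these two sums is at most $\max_{x \leqslant z \leqslant x+y} \sum_{N < n \leqslant 2N} |f(n)| \min(1, 1/(H\|z/n\|))$, which yields the asserted $O$-term. There is no genuine obstacle in this argument; the only step deserving a little attention is this last passage to the maximum — one must keep separately the two near-integer error terms $\min(1,1/(H\|(x+y)/n\|))$ and $\min(1,1/(H\|x/n\|))$ coming from the two applications of the Fourier expansion, and bound each of the two associated sums by the single common maximum over $z\in[x,x+y]$, rather than trying to merge them beforehand.
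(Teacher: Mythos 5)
Your argument is correct and follows essentially the same route as the paper: substitute the truncated Fourier expansion of $\psi$ at both $\frac{x+y}{n}$ and $\frac{x}{n}$, convert the difference of exponentials into $2\pi i \frac{h}{n}\int_x^{x+y} e\left(\frac{ht}{n}\right)\mathrm{d}t$, and absorb the two near-integer error sums into the single maximum over $z \in [x,x+y]$. No issues.
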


\begin{proof}
We use the representation 
$$\psi(t) = -\sum_{0 < \left| h \right| \leqslant H} \frac{e \left( ht \right)}{2\pi ih} + O \left\{ \min \left( 1,\frac {1}{H \left\| t\right\| }\right) \right\} \quad \left( H \in \Z_{\geqslant 1} \right) $$
to get 
\begin{eqnarray*}
   \sum_{N < n \leqslant 2N} f(n) \psi \left( \frac{x}{n}\right) &=& - \sum_{0 < \left| h \right| \leqslant H}\frac{1}{2\pi ih}\sum_{N < n \leqslant 2N} f(n) e \left( \frac{hx}{n} \right) \notag \\
   & & {} + O \left( \sum_{N < n \leqslant 2N} \left | f(n) \right | \min \left( 1,\frac{1}{H \left\| x/n\right\| }\right) \right)
\end{eqnarray*}
and the result follows using the identity 
$$e \left( a (x+y) \right) -e \left( ax \right) = 2 \pi i a \int_x^{x+y} e \left( at \right) \textrm{d} t.$$
The proof is complete.
\end{proof}

\section{Dirichlet's hyperbola principle in short intervals}
\label{s5}

\noindent
The Dirichlet hyperbola principle was first developed by Dirichlet circa 1840 which enabled him to prove the first non-trivial asymptotic result for the so-called Dirichlet divisor problem. It was later generalized in order to deal with long sums of convolution products of the shape $f \star g$ and usually provides good estimates when $f(n)$ or $g(n)$ is $\ll n^\varepsilon$. \\

\noindent
The next result is a short sum version of this very useful principle.

\begin{lemma}
\label{le3} 
Let $x,y,T \in \R$ such that $1 \leqslant \max \left (y,\frac{x}{y} \right) \leqslant T \leqslant x$. Then, for any arithmetic functions $f$ and $g$
\begin{eqnarray*}
   \sum_{x < n \leqslant x+y} \left( f \star g \right) (n) &=& \sum_{d \leqslant T} f(d) \sum_{\frac{x}{d} < k \leqslant \frac{x+y}{d}} g(k) + \sum_{k \leqslant x/T} g(k) \sum_{\frac{x}{k} < d \leqslant \frac{x+y}{k}} f(d) \\
   & & {} + O \left ( \max_{k \leqslant 2x/T} |g(k)|  \sum_{T < d \leqslant T(1+y/x)} \left| f(d) \right | \right).
\end{eqnarray*}
\end{lemma}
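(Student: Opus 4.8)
The plan is to expand the Dirichlet convolution and regard the resulting quantity as a sum of $f(d)g(e)$ over pairs of positive integers $(d,e)$ lying in the hyperbolic strip $x < de \leqslant x+y$, and then to split this region exactly in the spirit of Dirichlet's classical hyperbola argument; the one genuinely new ingredient will be the length hypothesis $y \leqslant T$.

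First I would write
\[
   \sum_{x < n \leqslant x+y} (f \star g)(n) \;=\; \sum_{\substack{d,e \geqslant 1 \\ x < de \leqslant x+y}} f(d)\, g(e)
\]
and partition the pairs $(d,e)$ occurring on the right into three classes: (i) $d \leqslant T$; (ii) $d > T$ and $e \leqslant x/T$; (iii) $d > T$ and $e > x/T$. These classes are pairwise disjoint and together exhaust all admissible pairs. Summing $f(d)g(e)$ over class (i) reproduces the first main term $\sum_{d \leqslant T} f(d) \sum_{x/d < k \leqslant (x+y)/d} g(k)$ verbatim. For class (ii) one reindexes by $k = e$ and uses that $k \leqslant x/T$ forces $x/k \geqslant T$, so that the constraint $d > T$ is implied by $d > x/k$ and may be dropped; this reproduces the second main term $\sum_{k \leqslant x/T} g(k) \sum_{x/k < d \leqslant (x+y)/k} f(d)$ verbatim. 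It therefore remains only to bound the contribution $R$ of class (iii).

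For class (iii) the decisive observation is that $d > T \geqslant y$, whence the integers $e$ with $x/d < e \leqslant (x+y)/d$ lie in an interval of length $y/d < 1$ and so number at most one. Moreover $de \leqslant x+y$ together with $e > x/T$ forces $d \leqslant (x+y)/e < T(1+y/x)$, while any such $e$ satisfies $e \leqslant (x+y)/d < (x+y)/T \leqslant 2x/T$, using $y \leqslant x$ (which itself follows from $\max(y,x/y) \leqslant T \leqslant x$). Consequently
\[
   |R| \;\leqslant\; \sum_{T < d \leqslant T(1+y/x)} |f(d)| \max_{x/d < e \leqslant (x+y)/d} |g(e)| \;\leqslant\; \Bigl( \max_{k \leqslant 2x/T} |g(k)| \Bigr) \sum_{T < d \leqslant T(1+y/x)} |f(d)|,
\]
which is exactly the error term in the statement.

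The only step requiring genuine care is the combinatorial bookkeeping in the middle paragraph: one must check that classes (i)--(iii) really do partition $\{(d,e) : x < de \leqslant x+y\}$ — for instance, a pair with $d \leqslant T$ automatically has $e > x/d \geqslant x/T$, so it cannot be recounted in class (ii) or (iii) — and that the inner summation ranges written down coincide exactly with the inequalities $x < de \leqslant x+y$ restricted to the relevant class. Beyond that nothing is subtle: no cancellation among the terms $f(d)g(e)$ is exploited, and the whole estimate rests on the trivial count of integers in an interval of length less than one, which is precisely where the hypothesis $y \leqslant T$ enters.
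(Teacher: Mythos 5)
Your proof is correct and follows essentially the same hyperbola decomposition as the paper: your three classes $d\leqslant T$; $d>T,\ e\leqslant x/T$; $d>T,\ e> x/T$ are exactly the paper's two main terms and its leftover sum $S_2(T)$. The only cosmetic difference is in handling the leftover region, where you note that for each $d>T\geqslant y$ there is at most one admissible $e$ (interval of length $y/d<1$), while the paper observes that the whole $k$-range $\left(\frac{x}{T},\frac{x+y}{T}\right]$ contains at most one integer; both observations rest on $y\leqslant T$ and yield the same error term.
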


\begin{proof}
Let $1 \leqslant \max \left (y,\frac{x}{y} \right) \leqslant T \leqslant x$. Then
\begin{eqnarray*}
   \sum_{x < n \leqslant x+y} \left( f \star g \right)(n) &=&\sum_{d \leqslant x+y} f(d) \sum_{\frac xd<k\leqslant \frac{x+y}d} g\left( k\right) \\
   &=&\sum_{d \leqslant T}f(d) \sum_{\frac{x}{d} < k \leqslant \frac{x+y}{d}} g(k) +\sum_{T < d \leqslant x+y} f(d) \sum_{\frac{x}{d} < k \leqslant \frac{x+y}{d}} g(k) \\
   &=&\sum_{d \leqslant T} f(d) \sum_{\frac{x}{d} < k \leqslant \frac{x+y}{d}} g(k) + S_1\left( T\right)
\end{eqnarray*}
where
\begin{eqnarray*}
   S_1(T) &=& \sum_{T < d \leqslant x+y }f(d) \sum_{k \leqslant \frac{x+y}{d}} g(k) - \sum_{T < d \leqslant x+y} f(d) \sum_{k \leqslant \frac{x}{d}} g(k) \\
   &=& \sum_{k\leqslant \frac{x+y}{T}} g(k) \sum_{T < d \leqslant \frac{x+y}{k}} f(d) -\sum_{k \leqslant \frac{x}{T}} g(k) \sum_{T <d \leqslant \frac{x}{k}} f(d) \\
   &=&\sum_{k\leqslant \frac xT}g\left( k\right) \sum_{T<d\leqslant \frac{x+y}k}f\left( d\right) +\sum_{\frac{x}{T} < k \leqslant \frac{x+y}{T}} g(k) \sum_{T < d \leqslant \frac{x+y}{k}} f(d) - \sum_{k \leqslant \frac{x}{T}} g(k) \sum_{T < d \leqslant \frac{x}{k}} f(d) \\
   &=& \sum_{k \leqslant x/T} g(k) \sum_{\frac{x}{k} < d \leqslant \frac{x+y}{k}} f(d) +\sum_{\frac{x}{T} < k \leqslant \frac{x+y}{T}} g(k) \sum_{T < d \leqslant \frac{x+y}{k}} f(d) \\
   &=& \sum_{k \leqslant x/T} g(k) \sum_{\frac{x}{k} < d \leqslant \frac{x+y}{k}} f(d) + S_2(T)
\end{eqnarray*}
say. Since $y \leqslant T \leqslant x$, the interval $\left( \frac{x}{T} , \frac{x+y}{T} \right] $ contains at most an integer, namely $\left \lfloor \frac{x+y}{T} \right \rfloor$, so that 
$$S_2(T) = g \left( \left \lfloor \frac{x+y}{T} \right \rfloor \right) \left( \left \lfloor \frac{x+y}{T} \right \rfloor - \left \lfloor \frac{x}{T} \right \rfloor \right) \sum_{T < d \leqslant \frac{x+y}{\lfloor (x+y)/T \rfloor}} f(d).$$
Note that $S_2(T) = 0$ in the following cases : \\
\begin{itemize}
   \item[-] either the interval $\left( \frac{x}{T} , \frac{x+y}{T} \right]$ does not contain any integer; \\
   \item[-] or $\frac{x+y}{T} \in \Z$, for then $\frac{x+y}{\lfloor (x+y)/T \rfloor}  = T$ and the inner sum of $S_2(T)$ is empty.
\end{itemize}
Now if $\frac{x+y}{T} \not \in \Z$ and if the interval $\left( \frac{x}{T} , \frac{x+y}{T} \right]$ contains an integer, then $\left \lfloor \frac{x+y}{T} \right \rfloor > \frac{x}{T}$ so that
$$0 \leqslant \frac{x+y}{\lfloor (x+y)/T \rfloor} - T < \frac{x+y}{x/T} - T = \frac{Ty}{x}$$
achieving the proof.
\end{proof}

\begin{remark}
Applying Lemmas~\ref{le1} and~\ref{le3} to our problem yields the estimate
$$\sum_{x < n \leqslant x+y} F(n) = \frac{y a_s}{s+1} (\log x)^{s+1} + O_\varepsilon \left (  xy^{\kappa-1} \, \left( \log x \right)^\beta + y (\log x)^{\max \left (s,\delta  + m - 1 \right )} + x^\varepsilon \right)$$
as soon as $x^{1/2} \leqslant y \leqslant x$, uniformly for any function $F$ satisfying \eqref{e1} and \eqref{e2}. Thus, Theorem~\ref{t1} substantially improves on the error term, and the rest of the text is devoted to show how one can get such an improvement.
\end{remark}
 
\section{Estimates for Hooley-type $\Delta$-functions}
\label{s5}

\noindent
We easily derive from \cite[Theorem 70]{hal} and partial summation the following result.

\begin{lemma}
\label{le4} 
For any $x \geqslant 1$ sufficiently large and any fixed integer $r \geqslant 2$
$$\sum_{n \leqslant x} \frac{\Delta_r (n)}{n} \ll_{r,\varepsilon} (\log x)^{1 + \epsilon_r (x)}$$
where $\epsilon_r \left( x \right)$ is defined in \eqref{e3}.
\end{lemma}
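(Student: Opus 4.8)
The plan is to combine \cite[Theorem 70]{hal}, which yields a bound of the shape
$$D_r(t) := \sum_{n \leqslant t} \Delta_r(n) \ll_{r,\varepsilon} t \, (\log t)^{\epsilon_r(t)}$$
valid for all $t$ larger than some constant depending on $r$, with partial summation. Abel's identity gives, for $x$ large,
$$\sum_{n \leqslant x} \frac{\Delta_r(n)}{n} = \frac{D_r(x)}{x} + \int_1^x \frac{D_r(t)}{t^2} \, \mathrm{d}t ;$$
the boundary term is immediately $\ll_{r,\varepsilon} (\log x)^{\epsilon_r(x)}$, so the whole matter reduces to showing that the integral is $\ll_{r,\varepsilon} (\log x)^{1+\epsilon_r(x)}$.

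To estimate the integral I would fix a large absolute constant $t_0$ and cut the range $\left[1,x\right]$ there. On $\left[1,t_0\right]$ one uses the trivial bound $D_r(t) \leqslant D_r(t_0) \ll_r 1$ (for instance via $\Delta_r \leqslant \tau_r$), so that $\int_1^{t_0} D_r(t)\,t^{-2}\,\mathrm{d}t = O_r(1)$; it is \emph{essential} to keep the trivial bound on this piece, since $\epsilon_r(t) \to +\infty$ as $t$ decreases to $e^e$ and the right-hand side of Theorem 70 is not integrable against $\mathrm{d}t/t$ near $t = e^e$. On $\left[t_0,x\right]$, Theorem 70 gives $\int_{t_0}^x D_r(t)\,t^{-2}\,\mathrm{d}t \ll_{r,\varepsilon} \int_{t_0}^x (\log t)^{\epsilon_r(t)}\,\tfrac{\mathrm{d}t}{t}$, and I would pull the slowly varying factor $(\log t)^{\epsilon_r(t)}$ out of the integral using its monotonicity (next paragraph), which leaves $\ll_{r,\varepsilon} (\log x)^{\epsilon_r(x)}\int_{t_0}^x t^{-1}\,\mathrm{d}t \leqslant (\log x)^{1+\epsilon_r(x)}$, as wanted.

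It remains to check that $t \mapsto (\log t)^{\epsilon_r(t)} = \exp\!\left(\epsilon_r(t)\log\log t\right)$ is nondecreasing once $t$ exceeds a constant; equivalently, that $\epsilon_r(t)\log\log t$ is nondecreasing there. Setting $v = \log\log t$, so that $\log\log\log t = \log v$, a short rearrangement of \eqref{e3} gives
$$\epsilon_r(t)\log\log t = (r-1)\sqrt{r\,v\log v} + 30\sqrt{\frac{r\,v}{\log v}},$$
and both $v\log v$ and $v/\log v$ are increasing for $v > e$, hence so is the left-hand side for $t > e^{e^e}$. Together with the boundary term and the inequality $(\log x)^{\epsilon_r(x)} \leqslant (\log x)^{1+\epsilon_r(x)}$, this completes the proof. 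The only point requiring any care is the treatment of $t$ near $e^e$, where Theorem 70 must be replaced by the trivial bound; everything else is a routine application of Abel summation.
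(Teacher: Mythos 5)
Your argument is exactly the route the paper intends: Lemma~\ref{le4} is stated there as an immediate consequence of \cite[Theorem~70]{hal} together with partial summation, with no further details given. Your write-up fills in those details correctly (Abel's identity, the trivial bound on the bounded initial range, and the monotonicity of $t \mapsto (\log t)^{\epsilon_r(t)}$ for $t$ beyond a constant), so it is a correct proof by essentially the same approach.
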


\noindent
Lemma~\ref{le4} implies the following short interval result for Hooley's $\Delta_r$-functions.

\begin{corollary}
\label{cor1}
For any large real numbers $1 \leqslant y \leqslant x$, small real number $\varepsilon \in \left( 0, \frac{1}{2} \right]$ and any fixed integer $r \geqslant 2$
$$\sum_{x < n \leqslant x+y} \Delta _r (n) \ll_{r,\varepsilon} y (\log x)^{\epsilon_r (x)} + x^\varepsilon$$
where $\epsilon_r \left( x \right)$ is defined in \eqref{e3}.
\end{corollary}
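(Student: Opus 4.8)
The plan is to derive Corollary~\ref{cor1} from Lemma~\ref{le4} by dissecting the short sum according to a divisor, exploiting the feature of $\Delta_r$ not shared by $\tau_r$: its value at $n$ is \emph{localised}, being the number of factorisations of certain divisors of $n$ whose parts lie in fixed dyadic windows. First I would fix, for each $n\in(x,x+y]$, a tuple $\vec u=\vec u(n)$ attaining the maximum in the definition of $\Delta_r(n)$, so that $\Delta_r(n)$ equals the number of $(r-1)$-tuples $(d_1,\dots,d_{r-1})$ with $D:=d_1\cdots d_{r-1}\mid n$ and $e^{u_i}<d_i\le e^{u_i+1}$ for each $i$. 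This expresses $\sum_{x<n\le x+y}\Delta_r(n)$ as a sum over triples $(n,(d_i),D)$, which I would re-organise according to whether $D\le T$ or $D>T$ for a threshold $T$ of order $x/y$ (legitimate since $y\le x$ forces $T\ge 1$).

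For $D\le T$ the number of $n\in(x,x+y]$ with $D\mid n$ is $y/D+O(1)$, and summing the number $w(D)\le\tau_{r-1}(D)$ of admissible factorisations of $D$ against this count reduces the problem to a weighted sum of $\Delta_r$-type quantities over $D\le T$. Here I would invoke Lemma~\ref{le4} — but applied to the dyadic sub-ranges of $D$ imposed by the window constraint rather than to the whole segment $D\le T$ — to produce the main term $y(\log x)^{\epsilon_r(x)}$. For $D>T$ each $D$ divides at most one $n\in(x,x+y]$, so this range, together with the $O(1)$-terms from the first range, is controlled by the trivial pointwise estimate $\Delta_r(n)\le\tau_r(n)\ll_\varepsilon n^{\varepsilon}$ combined with $D\le n\ll x$, yielding $O(x^{\varepsilon})$ after harmlessly rescaling $\varepsilon$.

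I expect the delicate point to be that $\Delta_r$ is neither multiplicative nor a Dirichlet convolution, so Lemma~\ref{le3} cannot be quoted directly; the window constraints $e^{u_i}<d_i\le e^{u_i+1}$ must be carried through the interchange of summations, and it is exactly this localisation — each $D$ confined to an interval of multiplicative length $e^{r-1}$ rather than merely to $D\le T$ — that replaces the $(\log x)^{1+\epsilon_r(x)}$ arising from the size of $\sum_{d\le T}\Delta_r(d)/d$ by the sought $(\log x)^{\epsilon_r(x)}$. A secondary technicality is that $\vec u$ depends on $n$: I would take $\vec u\in\Z^{r-1}$ at the cost of an absolute constant, so that only $O\big((\log x)^{r-1}\big)$ windows occur, and then verify that this polylogarithmic overhead is absorbed by the main term when $y$ is not too small and by the $x^{\varepsilon}$-term otherwise.
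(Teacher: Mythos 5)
Your plan founders at the point you yourself call a ``secondary technicality'': the dependence of the maximizing window $\vec u(n)$ on $n$. Once you interchange the order of summation and sum over the tuples $(d_1,\dotsc,d_{r-1})$ first, the constraint $e^{u_i(n)}<d_i\leqslant e^{u_i(n)+1}$ cannot be retained, because it refers to the variable $n$ you are now counting. You must therefore either drop it --- which gives $\sum_{D}\tau_{r-1}(D)\left(y/D+O(1)\right)$ and hence a main term of size $y(\log x)^{r-1}$ --- or fix an integer window and take a union bound over the $O\left((\log x)^{r-1}\right)$ possible windows, which gives the same $y(\log x)^{r-1}$, since for a single fixed window one has $\sum_D w(D)/D\ll 1$. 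Neither is anywhere near the target: $\epsilon_r(x)=o(1)$, so $(\log x)^{\epsilon_r(x)}$ is below every fixed power of $\log x$, and the ``polylogarithmic overhead'' is not absorbed --- it is precisely the loss the corollary forbids. Note also that keeping one fixed window would produce a main term $O(y)$, which is false in general (already for $r=2$ the mean value of $\Delta_2$ grows at least like $\log\log x$), so the window dependence is the heart of the matter, not a technicality. Moreover Lemma~\ref{le4} cannot enter as you describe: after the interchange the weight attached to $D$ is a window-restricted factorization count bounded by $\tau_{r-1}(D)$, not a quantity of the shape $\Delta_r(D)/D$, so there is no sum to which Lemma~\ref{le4} applies; and the range $D>T$ does not contribute $O(x^\varepsilon)$ --- bounding the incidences there costs at least $\sum_{x<n\leqslant x+y}\tau_r(n)\ll y\,x^{\varepsilon}$ trivially, or $y$ times powers of $\log x$ with more care, never $x^{\varepsilon}$ alone.

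The paper's proof is of a different nature and much shorter. For $1\leqslant y\leqslant x^{\varepsilon/2}$ it bounds the whole sum trivially by $y\max_{x<n\leqslant x+y}\Delta_r(n)\leqslant y\max_{x<n\leqslant x+y}\tau_r(n)\ll_{r,\varepsilon}x^{\varepsilon}$; this regime is the sole source of the $x^{\varepsilon}$ term. For $x^{\varepsilon/2}\leqslant y\leqslant x$ it quotes Corollary~4 of Nair and Tenenbaum \cite{nai}, a genuinely deep short-interval mean-value theorem for non-negative functions satisfying a sub-multiplicativity condition (which $\Delta_r$ does, e.g. $\Delta_r(mn)\leqslant\tau_r(m)\Delta_r(n)$ for $(m,n)=1$); that theorem reduces the short sum to the logarithmic long average $\sum_{n\leqslant x}\Delta_r(n)/n$, and Lemma~\ref{le4} then yields $y(\log x)^{\epsilon_r(x)}$. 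Some input of this Shiu/Nair--Tenenbaum type is unavoidable: no hyperbola-style dissection at $T\asymp x/y$ can reproduce the exponent $\epsilon_r(x)$.
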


\begin{proof}
If $1 \leqslant y \leqslant x^{\varepsilon/2}$, then
$$\sum_{x < n \leqslant x+y} \Delta _r (n) \ll y \, \max_{x < n \leqslant x+y} \Delta _r (n) \ll_{r,\varepsilon} y \, \max_{x < n \leqslant x+y} n^{\varepsilon / 2} \ll_{r,\varepsilon} x^\varepsilon$$
whereas the case $x^{\varepsilon/2} \leqslant y \leqslant x$ follows from \cite[Corollary~4]{nai} and Lemma~\ref{le4}.
\end{proof}

\noindent
Borrowing an idea from \cite{hal}, we get the following upper bound.

\begin{lemma}
\label{le5} 
For all $r,n,N \in \Z_{\geqslant 1}$ 
$$\sum_{\substack{ d \mid n \\ N < d \leqslant 2N}} \tau _r (d) \leqslant \left( \log 2eN \right)^{r-1} \Delta_{r+1} (n).$$
\end{lemma}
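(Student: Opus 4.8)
The strategy is to turn the sum into a count of divisor tuples, to localise $r-1$ of the coordinates to dyadic blocks of the form $(e^u,e^{u+1}]$, and to use the size restriction $N<d\leqslant 2N$ to pin the last coordinate to a single such block, so that each block is controlled by the definition of $\Delta_{r+1}$. Using $\tau_r(d)=\#\{(d_1,\dots,d_r)\in\Z_{\geqslant 1}^r:d_1\cdots d_r=d\}$, I would first rewrite
\[
\sum_{\substack{d\mid n\\ N<d\leqslant 2N}}\tau_r(d)=\#\bigl\{(d_1,\dots,d_r)\in\Z_{\geqslant 1}^r:\ d_1\cdots d_r\mid n,\ N<d_1\cdots d_r\leqslant 2N\bigr\},
\]
and note that $1\leqslant d_i\leqslant d_1\cdots d_r\leqslant 2N$ for each $i$.

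Next, I would cover $[1,2N]$ by the blocks $I_k:=(2Ne^{-k-1},2Ne^{-k}]$ for $0\leqslant k\leqslant\lfloor\log 2N\rfloor$: each one is of the shape $(e^u,e^{u+1}]$ (with $u=\log 2N-k-1$), they number $\lfloor\log 2N\rfloor+1\leqslant\log 2eN$, and since $2Ne^{-\lfloor\log 2N\rfloor-1}<1$ their union contains $[1,2N]$. Attaching to every tuple appearing above the label $(k_1,\dots,k_{r-1})$ determined by $d_i\in I_{k_i}$ for $1\leqslant i\leqslant r-1$ partitions the tuples into at most $(\log 2eN)^{r-1}$ classes, so it is enough to show that each class has at most $\Delta_{r+1}(n)$ members; summing over classes then gives the bound.

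The crux — which I expect to be the main obstacle — is to confine the last coordinate $d_r$, which equals $(d_1\cdots d_r)/(d_1\cdots d_{r-1})$, to one block $(e^{v},e^{v+1}]$ depending only on the label. For any single tuple this coordinate lies in $\bigl(N/(d_1\cdots d_{r-1}),\,2N/(d_1\cdots d_{r-1})\bigr]$, an interval of relative length $2<e$, hence in exactly one block; the difficulty is that inside a label class the product $d_1\cdots d_{r-1}$ is only determined up to a factor $e^{r-1}$, so a crude grouping would spread $d_r$ over $O(r)$ blocks and cost an extra factor. The clean constant in the statement requires carrying out this localisation losslessly — this is the point at which the idea borrowed from the reference is used — after which the definition of Hooley's $\Delta_{r+1}$ bounds the number of tuples inside each box $I_{k_1}\times\cdots\times I_{k_{r-1}}\times(e^{v},e^{v+1}]$ whose product divides $n$ by $\Delta_{r+1}(n)$, and summing over the at most $(\log 2eN)^{r-1}$ boxes completes the proof. (For $r=1$ there is no label and no middle coordinate, and the statement reduces to the observation that $(N,2N]\subseteq(N,eN]$ is a single block, so $\#\{d\mid n:N<d\leqslant 2N\}\leqslant\Delta_2(n)$.)
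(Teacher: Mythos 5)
Your set-up (counting $r$-tuples $(d_1,\dotsc,d_r)$ with $d_1\cdots d_r\mid n$ and $N<d_1\cdots d_r\leqslant 2N$, localising the first $r-1$ coordinates into at most $\log 2eN$ blocks each) is exactly the paper's decomposition, which is carried out there via the Hall--Tenenbaum identity $\tau_r(d)=\sum_{k_1=0}^{\lfloor\log d\rfloor}\cdots\sum_{k_{r-1}=0}^{\lfloor\log d\rfloor}\Delta(d;k_1,\dotsc,k_{r-1})$, an interchange of summations, and the substitution $d_r=d/(d_1\cdots d_{r-1})$. But your write-up stops precisely at the decisive point: the claim that each label class
$$\#\bigl\{(d_1,\dotsc,d_r):\ d_1\cdots d_r\mid n,\ d_i\in I_{k_i}\ (1\leqslant i\leqslant r-1),\ N<d_1\cdots d_r\leqslant 2N\bigr\}$$
is at most $\Delta_{r+1}(n)$ is never proved. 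Saying that ``the clean constant requires carrying out this localisation losslessly --- this is the point at which the idea borrowed from the reference is used'' names the obstacle but supplies no argument for it; since all the content of the lemma sits in this one inequality, the proposal as it stands is not a proof.

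What you can establish with the covering you describe is the lemma with an extra factor: within a class, $d_1\cdots d_{r-1}$ is confined to an interval of multiplicative length $e^{r-1}$, so $d_r=(d_1\cdots d_r)/(d_1\cdots d_{r-1})$ ranges over an interval of multiplicative length $2e^{r-1}$, which is covered by at most $r$ blocks of the form $(e^v,e^{v+1}]$; this gives the class count $\leqslant r\,\Delta_{r+1}(n)$ and hence the bound $r(\log 2eN)^{r-1}\Delta_{r+1}(n)$, which is perfectly sufficient for Proposition~\ref{pro1} and Theorem~\ref{t1}, where the implied constants may depend on $m$. You should either prove the lossless version or state and use this weaker form explicitly. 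It is worth noting that the paper's own proof passes over exactly the point you flag: after the substitution it bounds, for each fixed $(k_1,\dotsc,k_{r-1})$, the count of tuples with $N/(d_1\cdots d_{r-1})<d_r\leqslant 2N/(d_1\cdots d_{r-1})$ directly by $\Delta_{r+1}(n)$, implicitly using only that this $d_r$-interval has ratio $2<e$ for each fixed $(d_1,\dotsc,d_{r-1})$ --- the block containing $d_r$ still depends on $d_1,\dotsc,d_{r-1}$, which is the very difficulty you identified. So your diagnosis of the crux is sound, but a submission must resolve it rather than defer to an unspecified idea.
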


\begin{proof}
The result is obvious for $r=1$, so that we may suppose $r \geqslant 2$. For all $n \in \Z_{\geqslant 1}$ and $k_1, \ldots ,k_{r-1} \in \R$ , we set
$$\Delta \left( n;k_1,\ldots ,k_{r-1}\right) := \sum_{\substack{d_1 \cdots d_{r-1} \mid n \\ e^{k_i} < d_i \leqslant e^{k_i+1}}} 1$$
so that from \cite[page~122]{hal}
$$\tau _r (d) = \sum_{k_1=0}^{\left \lfloor \log d \right \rfloor} \cdots \sum_{k_{r-1}=0}^{\left \lfloor \log d \right \rfloor} \Delta \left( d;k_1,\ldots,k_{r-1}\right)$$
and hence
\begin{eqnarray*}
   \sum_{\substack{ d \mid n \\ N < d \leqslant 2N}} \tau _r (d) &=& \sum_{\substack{ d \mid n \\ N < d \leqslant 2N}} \sum_{k_1=0}^{\left \lfloor \log d \right \rfloor} \cdots \sum_{k_{r-1}=0}^{\left \lfloor \log d \right \rfloor} \Delta \left( d;k_1,\ldots,k_{r-1}\right) \\
   & \leqslant & \sum_{k_1=0}^{\left \lfloor \log 2N \right \rfloor} \cdots \sum_{k_{r-1}=0}^{\left \lfloor \log 2N \right \rfloor} \sum_{\substack{ d \mid n \\ N < d \leqslant 2N}} \,\sum_{\substack{d_1 \cdots d_{r-1} \mid d \\ e^{k_i} < d_i \leqslant e^{k_i+1}}} 1 \\
   & \leqslant & \sum_{k_1=0}^{\left \lfloor \log 2N \right \rfloor} \cdots \sum_{k_{r-1}=0}^{\left \lfloor \log 2N \right \rfloor} \sum_{\substack{d_1 \cdots d_{r-1} d_r \mid n \\ e^{k_i} < d_i \leqslant e^{k_i+1} \\ N / \left( d_1 \cdots d_{r-1}\right) < d_r \leqslant 2N/\left( d_1\cdots d_{r-1}\right)}} 1\\
   & \leqslant & \sum_{k_1=0}^{\left \lfloor \log 2N \right \rfloor} \cdots \sum_{k_{r-1}=0}^{\left \lfloor \log 2N \right \rfloor} \Delta _{r+1}\left( n\right) \leqslant \left( \log 2eN\right) ^{r-1}\Delta _{r+1}\left( n\right)
\end{eqnarray*}
as asserted.
\end{proof}

\noindent
Now from Corollary~\ref{cor1} and Lemma~\ref{le5}, we are in a position to establish the main result of this section.

\begin{proposition}
\label{pro1}
Let $z \geqslant 1$ be any real number, $4 \leqslant H \leqslant N \leqslant z$ and $m \geqslant 1$ be integers. Then
$$\sum_{N < n \leqslant 2N} \tau_m(n) \min \left( 1,\frac {1}{H \left\| z/n \right\| }\right) \ll NH^{-1} \log H (\log N)^{m-1} \left( \log z\right)^{\epsilon_{m+1} (z)} + z^{\varepsilon}$$
where $\epsilon_{m+1} (z)$ is defined in \eqref{e3}.
\end{proposition}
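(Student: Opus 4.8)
The plan is to reduce the $\tau_m$-weighted sum with the cutoff $\min\!\bigl(1,(H\|z/n\|)^{-1}\bigr)$ to a sum of the Hooley function $\Delta_{m+1}$ over a short interval of integers near $z$, so that Lemma~\ref{le5} and Corollary~\ref{cor1} can be brought to bear. The mechanism is this: if $n\in(N,2N]$ and $\|z/n\|$ is small, then $n$ is a divisor of the integer $\nu:=n\lfloor z/n\rceil$, which lies in a window of length $O\bigl(n\|z/n\|\bigr)$ around $z$; summing $\tau_m$ over the divisors of such a $\nu$ that lie in $(N,2N]$ is at most $(\log 2eN)^{m-1}\Delta_{m+1}(\nu)$ by Lemma~\ref{le5} (with $r=m$), and the total $\Delta_{m+1}$-mass of a short interval near $z$ is controlled by Corollary~\ref{cor1}.

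First I would decompose the cutoff dyadically. Put $J:=\lceil\log_2 H\rceil$ and $\delta_j:=2^{j+1}/H$ for $0\le j\le J$; then $2/H\le\delta_j\ll1$, and $\delta_j N\ge2$ because $H\le N$. If $\|z/n\|\le1/H$ the cutoff is $1$, and if $\|z/n\|\in(2^{j}/H,2^{j+1}/H]$ it is at most $2^{-j}$; since such an $n$ then satisfies $\|z/n\|\le\delta_{j'}$ for every $j'$ at least its dyadic level, and $\sum_{j'\ge j}2^{-j'}\ll2^{-j}$, positivity of $\tau_m$ gives
\[
\sum_{N<n\le2N}\tau_m(n)\,\min\!\Bigl(1,\tfrac{1}{H\|z/n\|}\Bigr)\ \ll\ \sum_{j=0}^{J}2^{-j}A_j,\qquad A_j:=\sum_{\substack{N<n\le2N\\ \|z/n\|\le\delta_j}}\tau_m(n).
\]

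Next I would estimate $A_j$. For $n\in(N,2N]$ we have $z/n\ge z/(2N)\ge1/2$ (since $N\le z$), so $a:=\lfloor z/n\rceil\ge1$ and $\nu:=an$ is a positive integer with $n\mid\nu$ and $|\nu-z|=n\|z/n\|\le2\delta_j N$. Thus the set of $n$ counted by $A_j$ is contained in $\bigcup_{\nu}\{\,d\mid\nu:\ N<d\le2N\,\}$, the union being over the $O(\delta_jN)$ integers $\nu$ with $|\nu-z|\le2\delta_jN$; by positivity of $\tau_m$ and Lemma~\ref{le5},
\[
A_j\ \le\ \sum_{|\nu-z|\le2\delta_jN}\ \sum_{\substack{d\mid\nu\\ N<d\le2N}}\tau_m(d)\ \le\ (\log 2eN)^{m-1}\sum_{|\nu-z|\le2\delta_jN}\Delta_{m+1}(\nu).
\]
When $\delta_j\le z/(6N)$ the integers $\nu$ above fill a short interval near $z$ (of length $\ll\delta_jN\ge1$, with $z-2\delta_jN\ge\tfrac23 z$ and $4\delta_jN\le z-2\delta_jN$), so Corollary~\ref{cor1} with $r=m+1$ yields $\sum_{|\nu-z|\le2\delta_jN}\Delta_{m+1}(\nu)\ll\delta_jN(\log z)^{\epsilon_{m+1}(z)}+z^{\varepsilon}$, where I would also invoke the slow variation of $\epsilon_{m+1}$ to replace $(\log(z-2\delta_jN))^{\epsilon_{m+1}(z-2\delta_jN)}$ by $(\log z)^{\epsilon_{m+1}(z)}$. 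Hence for these $j$, since $2^{-j}\delta_jN=2N/H$ is independent of $j$,
\[
2^{-j}A_j\ \ll\ (\log 2eN)^{m-1}\Bigl(\tfrac{2N}{H}(\log z)^{\epsilon_{m+1}(z)}+z^{\varepsilon}\Bigr),
\]
and summing over $0\le j\le J$ produces the factor $J+1\ll\log H$ in front of the first term, while the $z^{\varepsilon}$-terms total $\ll(\log 2eN)^{m-1}z^{\varepsilon}\ll z^{\varepsilon}$ after renaming $\varepsilon$ (using $N\le z$).

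The one delicate point — and the main obstacle, though a mild one — is the range of $j$ with $\delta_j>z/(6N)$, where the interval of $\nu$'s is too long (and may even reach below $1$) for Corollary~\ref{cor1} to apply as stated. These $j$ satisfy $j>\log_2\bigl(Hz/(12N)\bigr)$, so, as $N\le z$ and $J\ll\log H$, there are only $O(1)$ of them; for each I would fall back on the trivial bound $A_j\le\sum_{N<n\le2N}\tau_m(n)\ll N(\log 2N)^{m-1}$ together with $2^{-j}\ll N/(Hz)$, whence their combined contribution is $\ll N^2(\log 2N)^{m-1}/(Hz)\le N(\log 2N)^{m-1}/H$, which is dominated by the main term. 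Adding the two contributions gives the claimed bound. The substance lies entirely in the divisor-sum reduction of the third paragraph; everything else is routine dyadic decomposition and boundary analysis.
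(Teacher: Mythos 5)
Your proof is correct and follows essentially the same route as the paper's: a dyadic decomposition of the weight $\min\left(1,\frac{1}{H\|z/n\|}\right)$ into level sets of $\|z/n\|$, reduction of the near-integer condition to a divisor sum over the integers in a short window around $z$ (the paper does this via floor-function differences and multiples $\ell$ of $n$, you via $\nu=n\lfloor z/n\rceil$, which is the same mechanism), followed by Lemma~\ref{le5} and Corollary~\ref{cor1}. Your explicit handling of the boundary range where the window around $z$ gets too long is a detail the paper glosses over, but it does not alter the method.
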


\begin{proof}
Define $K := \left \lfloor \log H/\log 2 \right \rfloor \geqslant 2$. Generalizing \cite[Lemma~2.2]{bor0}, we get
$$\sum_{N < n \leqslant 2N} \tau_m(n) \min \left( 1 , \frac{1}{H \| z/n \|} \right) \ll H^{-1} \sum_{N < n \leqslant 2N} \tau_m(n) + \sum_{k=0}^{K-2} 2^{-k} \sum_{\substack{ N < n \leqslant 2N \\ \| z/n \| < 2^k H^{-1}}} \tau_m(n)$$
and using Lemma~\ref{le5} and Corollary~\ref{cor1} we obtain for any $k \in \left\lbrace 0, \dotsc, K-2 \right\rbrace$
\begin{eqnarray*}
   \sum_{\substack{ N < n \leqslant 2N \\ \| z/n \| < 2^k H^{-1}}} \tau_m(n) & \leqslant & \sum_{N < n \leqslant 2N} \tau_m(n) \left( \left \lfloor \frac{z}{n} + 2^k H^{-1} \right \rfloor - \left \lfloor \frac{z}{n} - 2^k H^{-1} \right \rfloor \right) \\
   & \leqslant & \sum_{z-2^{k+1} NH^{-1} < \ell \leqslant z+2^{k+1} NH^{-1}} \sum_{\substack{n \mid \ell \\ N < n \leqslant 2N}} \tau_m(n) \\
   & \leqslant & (\log 2eN)^{m-1} \sum_{z-2^{k+1} NH^{-1} < \ell \leqslant z+2^{k+1} NH^{-1}} \Delta_{m+1} (\ell) \\
   & \ll & (\log N)^{m-1} \left\lbrace 2^k NH^{-1} (\log z)^{\epsilon_{m+1} (z)} + z^\varepsilon \right\rbrace 
\end{eqnarray*}
and thus
\begin{eqnarray*}
   & & \sum_{N < n \leqslant 2N} \tau_m(n) \min \left( 1 , \frac{1}{H \| z/n \|} \right) \\
   & \ll & NH^{-1} (\log N)^{m-1} + \sum_{k=0}^{K-2} 2^{-k} \left\lbrace 2^k NH^{-1} (\log z)^{\epsilon_{m+1} (z)} + z^\varepsilon \right\rbrace (\log N)^{m-1} \\
   & \ll & NH^{-1} \log H (\log N)^{m-1} (\log z)^{\epsilon_{m+1} (z)} + z^{2\varepsilon}
\end{eqnarray*}
achieving the proof.
\end{proof}

\section{Proof of Theorem~\ref{t1}}
\label{s6}

\noindent
Let $e^e < y \leqslant x$ and $\max \left (y,\frac{x}{y} \right) \leqslant T \leqslant x$. From Lemma~\ref{le3} with $g=\mathbf{1}$, \eqref{e2} and Shiu's theorem \cite{shi}, we get
\begin{eqnarray*}
   \sum_{x < n \leqslant x+y} F (n) &=& \sum_{d \leqslant T} f(d) \left( \left \lfloor \frac{x+y}{d} \right \rfloor - \left \lfloor \frac{x}{d} \right \rfloor \right) + \sum_{k \leqslant \frac{x}{T}} \sum_{\frac{x}{k} < d \leqslant \frac{x+y}{k}} f(d) \\
   & & \\
   & & {} + O_{A} \left( (\log T)^\delta \sum_{T < n \leqslant T+y} \tau_m(n) \right) \\
   & & \\
   &=& y \sum_{d \leqslant T} \frac{f(d)}{d} - \sum_{y < d \leqslant T} f(d) \left( \psi \left( \frac{x+y}{d} \right) - \psi \left( \frac{x}{d} \right) \right) + \sum_{k \leqslant \frac{x}{T}} \sum_{\frac{x}{k} < d \leqslant \frac{x+y}{k}} f(d) \\
   & & {} + O \left( \sum_{d \leqslant y} \left | f(d) \right |\right) + O_{A,\varepsilon} \left( y (\log T)^{\delta+m-1} + x^{ \varepsilon} \right) \\
   & & \\
   &:=& y S_1 - S_2 + S_3  + O_{A,\varepsilon} \left( y (\log T)^{\delta+m-1} + x^{ \varepsilon} \right).
\end{eqnarray*}
For $S_1$, using partial summation and \eqref{e1}, we get
   \begin{eqnarray}
      \sum_{d \leqslant T} \frac{f(d)}{d} &=& \sum_{j=0}^s a_j (\log T)^j + \sum_{j=0}^s a_j \int_1^T \frac{(\log t)^j}{t} \, \textrm{d}t + O \left( 1 \right) \notag \\
      & & \notag \\
      &=& \frac{a_s}{s+1} (\log T)^{s+1} + O \left( \log^s T \right). \label{e5}
   \end{eqnarray}
For $S_3$, using Lemma~\ref{le1}, we derive
\begin{eqnarray*}
   S_3 &=& \sum_{k \leqslant \frac{x}{T}} \left\lbrace \frac{y a_s}{k} \log^s \frac{x}{k} + O \left( \frac{y}{k} \log^{s-1} x  + \left( \frac{x}{k} \right)^\kappa \log^\beta x \right) \right\rbrace \\
   & & \\
   &=& y a_s \sum_{k \leqslant \frac{x}{T}} \frac{1}{k} \log^s \frac{x}{k} + O \left( y \log^s x + xT^{\kappa - 1} \log^\beta x \right)
\end{eqnarray*}
and using
$$\sum_{k \leqslant z} \tfrac{1}{k} \left( \log \tfrac{x}{k} \right)^j = \tfrac{1}{j+1} \left\lbrace (\log x)^{j+1} - \left (\log \tfrac{x}{z} \right)^{j+1} \right\rbrace + O_j \left( \log^j x \right) \quad \left( j \in \Z_{\geqslant 0}, \ 1 \leqslant z \leqslant x \right)$$
we get
\begin{equation}
   S_3 = \frac{y a_s}{s+1} \left\lbrace (\log x)^{s+1} - \left (\log T \right)^{s+1} \right\rbrace + O \left( y \log^s x + xT^{\kappa - 1} \log^\beta x \right). \label{e6}
\end{equation}
It remains to estimate $S_2$. To this end, we first split the interval $\left( y,T \right]$ into $O \left (\log \frac{T}{y} \right )$ dyadic sub-intervals of the shape $\left( N,2N \right]$ giving
$$| S_2 | \ll \max_{y < N \leqslant T} \left | \Sigma_F(N,x,y) \right | \log (T/y)$$
where $\Sigma_F(N,x,y)$ is given in \eqref{e4}. From Lemma~\ref{le2} and Abel summation, we get
\begin{eqnarray*}
   \left | \Sigma_F(N,x,y) \right | & \ll & \frac{y}{N} \max_{N \leqslant N_1 \leqslant 2N} \, \max_{x \leqslant z \leqslant x+y} \, \sum_{N < n \leqslant N_1} \left | f(n) \right| \left | \sum_{h \leqslant H}  e\left( \frac{hz}{n} \right) \right| \\
   & & {} + \max_{x \leqslant z \leqslant x+y} \, \sum_{N < n \leqslant 2N} \left | f(n) \right| \min \left( 1,\frac{1}{H \left\| z/n\right\| }\right)  \\
   & & \\
   & \ll_A & \left\lbrace \frac{yH}{N} \max_{x \leqslant z \leqslant x+y} \,  \sum_{N < n \leqslant 2N} \tau_m(n) \min \left( 1,\frac{1}{H \left\| z/n\right\| }\right) \right.   \\
   & & \left. {}  +  \max_{x \leqslant z \leqslant x+y} \, \sum_{N < n \leqslant 2N} \tau_m(n) \min \left( 1,\frac{1}{H \left\| z/n \right\| }\right) \right\rbrace (\log N)^\delta  \\
\end{eqnarray*}
and Proposition~\ref{pro1} implies that, for $H \geqslant 4$ and $\max(H,y,\frac{x}{y}) \leqslant N \leqslant T \leqslant x$, we have
\begin{eqnarray*}
   \left | \Sigma_F(N,x,y) \right | & \ll_{A,\varepsilon} & \log H (\log N)^{\delta+m-1} (\log x)^{\epsilon_{m+1} (x)} \left( y + N H^{-1} \right) + x^{2 \varepsilon} \left( yHN^{-1} + 1 \right) \\
   & \ll_{A,\varepsilon} & y \log (N/y) (\log N)^{\delta+m-1} (\log x)^{\epsilon_{m+1} (x)} + x^{2 \varepsilon}
\end{eqnarray*}
with the choice of $H = 4 \left \lfloor N y^{-1}\right \rfloor$. If $\max \left(y,\frac{x}{y} \right) \leqslant T \leqslant x$, adding the contributions from \eqref{e5} and \eqref{e6} we deduce that
\begin{eqnarray*}
   \sum_{x < n \leqslant x+y} F (n) &=& \frac{y a_s}{s+1} \, (\log x)^{s+1} \\
   & & {} + O \left \{ xT^{\kappa - 1} (\log x)^\beta + y \left( \log \tfrac{T}{y} \right)^2 (\log T)^{\delta+m-1} (\log x)^{\epsilon_{m+1} (x)} \right. \\
   & & \left. {} + y \log^s x + x^{2 \varepsilon} \right \}
\end{eqnarray*}
the term $y(\log T)^{\delta+m-1}$ being absorbed by $y \left( \log (T/y) \right)^2 (\log T)^{\delta+m-1} (\log x)^{\epsilon_{m+1} (x)}$. The asserted result then  follows from choosing $T = y \exp \left( (\log x)^{1/4} \right)$.
\qed

\section{Applications and examples}
\label{s7}

\noindent
In this section, the following additional arithmetic functions will appear. \\

\noindent
\begin{scriptsize} $\triangleright$ \end{scriptsize} For any $k \in \Z_{\geqslant 2}$, let $\mu_k$ be the characteristic function of the set of $k$-free numbers. Note that $\mu_2 = \mu^2$. \\

\noindent
\begin{scriptsize} $\triangleright$ \end{scriptsize} For any $n,k \in \Z_{\geqslant 2}$, $\tau_{(k)}(n)$ counts the number of $k$-free divisors of $n$, with the convention $\tau_{(k)}(1)=1$. Note that $\tau_{(2)} = 2^{\omega}$ where, as usual, $\omega(n)$ is the number of distinct prime factors of $n$. \\

\noindent
\begin{scriptsize} $\triangleright$ \end{scriptsize} For any $k \in \Z_{\geqslant 1}$, $\Lambda_k$ is the $k$th von Mangoldt's function defined by
$\Lambda_k = \mu \star \log^k$. Similarly, if $g$ is any arithmetic function satisfying $g(1) \neq 0$, the von Mangoldt function $\Lambda_g$ attached to $g$ is implicitely defined by the equation $\Lambda_g \star g = g \times \log$.

\subsection{Example~1}

\noindent
In \cite{gar}, it is shown that, uniformly for $x^{1/2} \log x < y < x^{1/2} (\log x)^{5/2}$
$$\sum_{x < n \leqslant x+y} \tau_4 (n) = \tfrac{1}{6} \, y (\log x)^3 + O \left( (xy)^{1/3} (\log x)^{2/3} \right).$$

\noindent
Let $k \in \Z_{\geqslant 2}$ and take $f=\tau_{k-1}$. Applying Theorem~\ref{t1} with $m=k-1$, $s=k-2$, $\delta = 0$, $\kappa = 1 - \frac{2}{k}$, we get

\begin{corollary}
\label{cor2}
For any $x^{1/2} e^{-\frac{1}{2}(\log x)^{1/4}} \leqslant y \leqslant xe^{-(\log x)^{1/4}}$ and any $\varepsilon \in \left] 0,\frac{1}{2} \right]$ 
\begin{eqnarray*}
   \sum_{x < n \leqslant x+y} \tau_k (n) &=& \frac{y (\log x)^{k-1}}{(k-1)!} \\
   & & {} + O_\varepsilon \left( x y^{- 2/k+\varepsilon}e^{ \left( - \frac{2}{k} +\varepsilon \right) (\log x)^{1/4}} + y (\log x)^{k-3/2 + \epsilon_k(x)} + x^\varepsilon \right)
\end{eqnarray*}
the term $y^\varepsilon$ being omitted when $k=2$. In particular,
$$\sum_{x < n \leqslant x+y} \tau_4 (n) = \tfrac{1}{6} \, y (\log x)^3 + O \left( y (\log x)^{5/2+\epsilon_4(x)}  \right)$$
for all $x^{2/3+\varepsilon} e^{- \frac{1}{3} (\log x)^{1/4}} \leqslant y \leqslant x e^{-(\log x)^{1/4}}$.
\end{corollary}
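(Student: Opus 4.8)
The plan is to read off Corollary~\ref{cor2} from Theorem~\ref{t1} applied to $F = \tau_k$. Since $\tau_k = \mathbf{1} \star \tau_{k-1}$, the Eratosthenes transform is $f = F \star \mu = \tau_{k-1}$, so everything comes down to checking that $\tau_{k-1}$ satisfies \eqref{e1} and \eqref{e2} with $m = k-1$, $s = k-2$, $\delta = 0$ and $\kappa = 1 - \tfrac2k$. Bound \eqref{e2} is immediate with $A = 1$, because $\tau_{k-1}(n) = \tau_m(n) = \tau_m(n)(\log en)^0$. For \eqref{e1} I would quote the classical Piltz divisor estimate
$$\sum_{n \leqslant z} \tau_{k-1}(n) = z\,P_{k-2}(\log z) + O_\varepsilon\!\left( z^{1-2/k+\varepsilon} \right),$$
in which $P_{k-2}$ has degree $k-2$ and leading coefficient $\tfrac{1}{(k-2)!}$; when $k = 2$ this is simply $\lfloor z\rfloor = z + O(1)$, so that $\kappa = 0$ and the error is $O(1)$ with no $\varepsilon$, whereas for $k \geqslant 3$ the error is of the sharper shape $O(z^{\kappa+\varepsilon})$ and one uses the variant of Theorem~\ref{t1} recorded in the remark that immediately follows it. On the range $x^{1/2}e^{-\frac12(\log x)^{1/4}} \leqslant y \leqslant xe^{-(\log x)^{1/4}}$ all the hypotheses of Theorem~\ref{t1} hold for $x$ large.

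Feeding these data in, Theorem~\ref{t1} delivers the main term
$$\frac{y\,a_s}{s+1}(\log x)^{s+1} = \frac{y}{(k-1)(k-2)!}(\log x)^{k-1} = \frac{y(\log x)^{k-1}}{(k-1)!},$$
together with the error contributions $xy^{\kappa-1+\varepsilon}e^{(\kappa-1+\varepsilon)(\log x)^{1/4}} = xy^{-2/k+\varepsilon}e^{(-2/k+\varepsilon)(\log x)^{1/4}}$ (the factor $y^\varepsilon$ being superfluous when $k = 2$, where \eqref{e1} holds with error $O(1)$), then $y(\log x)^{\max(s,\,\delta+m-\frac12+\epsilon_{m+1}(x))} = y(\log x)^{k-3/2+\epsilon_k(x)}$ (the maximum being attained by the second entry, since $k-\tfrac32 > k-2 = s$ and $\epsilon_k(x) > 0$), and finally $x^\varepsilon$. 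This is exactly the first displayed estimate of the corollary.

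For the ``in particular'' assertion, specialise to $k = 4$: the main term becomes $\tfrac16 y(\log x)^3$ and the exponent $k-\tfrac32$ equals $\tfrac52$, so it remains only to absorb the other two error terms into $y(\log x)^{5/2+\epsilon_4(x)}$ on the narrower interval $x^{2/3+\varepsilon}e^{-\frac13(\log x)^{1/4}} \leqslant y \leqslant xe^{-(\log x)^{1/4}}$ (taking the parameter $\varepsilon$ in the general estimate equal to the one fixing the range). Since $y$ dominates a fixed power of $x$, $x^\varepsilon/y \leqslant x^{-2/3}e^{\frac13(\log x)^{1/4}} \to 0$; and the lower bound on $y$ gives $y^{3/2-\varepsilon} \gg x^{1+\varepsilon/3}e^{-\frac12(\log x)^{1/4}}$, whence $xy^{-3/2+\varepsilon}e^{(-1/2+\varepsilon)(\log x)^{1/4}} \ll x^{-\varepsilon/3}e^{\varepsilon(\log x)^{1/4}} \to 0$, the power saving $x^{-\varepsilon/3}$ beating the subpolynomial factor $e^{\varepsilon(\log x)^{1/4}}$. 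Hence both leftover terms are $\ll y \ll y(\log x)^{5/2+\epsilon_4(x)}$, which completes the deduction.

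The whole argument is bookkeeping; its sole non-elementary ingredient is the classical bound $\Delta_{k-1}(z) \ll_\varepsilon z^{1-2/k+\varepsilon}$ for the $(k-1)$-dimensional divisor problem --- which is precisely what fixes the value $\kappa = 1-\tfrac2k$ --- and no genuine obstacle arises beyond the elementary handling of the error terms in the last step.
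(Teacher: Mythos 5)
Your proposal is correct and follows exactly the paper's route: take $f=\tau_{k-1}$, verify \eqref{e1} via the classical Piltz divisor asymptotic with $\kappa=1-\frac{2}{k}$ (using the remark after Theorem~\ref{t1} for the $O(z^{\kappa+\varepsilon})$ form, with the $k=2$ case giving $O(1)$) and \eqref{e2} with $m=k-1$, $\delta=0$, $A=1$, then apply Theorem~\ref{t1} with $s=k-2$ and do the bookkeeping for the $k=4$ range. The only cosmetic caveat is that your final $\Delta_{k-1}(z)$ denotes the Piltz divisor-problem error term, whereas the paper reserves $\Delta_r$ for Hooley's function.
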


\subsection{Example~2}

\noindent
In this example, $F(n)$ is either $\tau(n)^2$, or $\tau \left( n^3 \right)$. Improving on a result from \cite{gar2}, Zhai \cite[Corollary~4]{zhai} showed that
$$\sum_{x < n \leqslant x+y} F(n) \sim C_F \, y (\log x)^3$$
for $y=o(x)$ with 
$$\frac{y}{x^{1/2} \log x} \to \infty \quad \textrm{lorsque} \quad x \to \infty$$
and where $C_F = \dfrac{1}{6 \zeta(2)}$ if $F = \tau^2$, and\footnote{There is a misprint on $C_F$ in \cite[Corollary~1]{gar2} for which the coefficient $\frac{1}{6}$ has been forgotten in both cases.} 
$$C_F = \frac{1}{6} \prod_p \left( 1- \frac{1}{p} \right)^2 \left( 1 + \frac{2}{p} \right) \quad \textrm{if\ } F = \tau \circ \id^3.$$
Let $f$ be the Eratosthenes transform of $F$. Note that, if $F = \tau^2$, then $f = \tau \circ \id^2$, and if $F = \tau \circ \id^3$, then $f=3^{\omega}$, so that in both cases
$$\left | f(n) \right | \leqslant \tau_3(n) \quad \left( n \in \Z_{\geqslant 1} \right)$$
and by \cite[Main Theorem]{zhai}, we have in both cases
$$\sum_{n \leqslant x} f(n) = 3C_F \,x (\log x)^2 + A x \log x + Bx  + O \left( x^{1/2} (\log x)^4 \right)$$
where $A,B \in \R$. We may apply Theorem~\ref{t1} with $\delta = 0$, $A=1$, $s=2$, $m=3$, $\beta = 4$ and $\kappa = \frac{1}{2}$, giving the following more precise version of Zhai's result.

\begin{corollary}
\label{cor3}
The function $F$ being indifferently either $\tau^2$ or $\tau \circ \id^3$, for any $x^{1/2} e^{-\frac{1}{2}(\log x)^{1/4}} \leqslant y \leqslant xe^{-(\log x)^{1/4}}$ and any $\varepsilon \in \left] 0,\frac{1}{2} \right]$
\begin{eqnarray*}
   \sum_{x < n \leqslant x+y} F (n) &=& C_F \, y (\log x)^3 \\
   & & {} + O_\varepsilon \left( x y^{-1/2} \, e^{-\frac{1}{2}(\log x)^{1/4}}(\log x)^4 + y (\log x)^{5/2+\epsilon_4(x)}  + x^\varepsilon \right)
\end{eqnarray*}
where $C_F = \dfrac{1}{6 \zeta(2)}$ if $F = \tau^2$, and
$$C_F = \frac{1}{6} \prod_p \left( 1- \frac{1}{p} \right)^2 \left( 1 + \frac{2}{p} \right)$$
if $F = \tau \circ \id^3$. In particular, if $x^{2/3} e^{-\frac{1}{3}(\log x)^{1/4}}\log x \leqslant y \leqslant xe^{-(\log x)^{1/4}}$
$$\sum_{x < n \leqslant x+y} F (n) = C_F \, y (\log x)^3 + O_\varepsilon \left( y (\log x)^{5/2+\epsilon_4(x)} \right).$$
\end{corollary}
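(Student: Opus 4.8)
The plan is to obtain Corollary~\ref{cor3} as a direct consequence of Theorem~\ref{t1}, the only work being to verify hypotheses \eqref{e1} and \eqref{e2} for each of the two admissible functions $F$ and then to read off and simplify the parameters.

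First I would check \eqref{e2}. In both cases $F$ is multiplicative, hence so is its Eratosthenes transform $f = F \star \mu$, and as recorded in the text $f = \tau \circ \id^2$ when $F = \tau^2$ and $f = 3^{\omega}$ when $F = \tau \circ \id^3$. Comparing the multiplicative functions $|f|$ and $\tau_3$ on prime powers, one has $f(p^a) = \tau(p^{2a}) = 2a+1$ respectively $f(p^a) = 3$, while $\tau_3(p^a) = \binom{a+2}{2}$, and $2a+1 \leqslant \binom{a+2}{2}$ and $3 \leqslant \binom{a+2}{2}$ for every $a \geqslant 1$. Hence $|f(n)| \leqslant \tau_3(n)$ for all $n$, so \eqref{e2} holds with $m = 3$, $\delta = 0$, $A = 1$. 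For \eqref{e1} I would invoke Zhai's Main Theorem from \cite{zhai}, which gives in both cases
\[
\sum_{n \leqslant z} f(n) = 3 C_F\, z (\log z)^2 + A z \log z + B z + O\left( z^{1/2} (\log z)^4 \right),
\]
i.e.\ exactly \eqref{e1} with $s = 2$, $a_2 = 3 C_F$, $a_1 = A$, $a_0 = B$, $\kappa = \tfrac12$, $\beta = 4$; here $a_s = 3 C_F \neq 0$, being the leading singular coefficient of $\zeta(s)^3/\zeta(2s)$ (resp.\ of $\zeta(s)^3 \prod_p(1 - 3 p^{-2s} + 2 p^{-3s})$) at $s = 1$, so Theorem~\ref{t1} is applicable.

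Then I would substitute. The main term of Theorem~\ref{t1} becomes $\frac{y a_s}{s+1}(\log x)^{s+1} = \frac{3 C_F}{3}\, y (\log x)^3 = C_F\, y (\log x)^3$. In the error term, $\kappa - 1 = -\tfrac12$ yields the first summand $x y^{-1/2} e^{-\frac12 (\log x)^{1/4}} (\log x)^4$; the exponent in the second is $\max\bigl(s,\ \delta + m - \tfrac12 + \epsilon_{m+1}(x)\bigr) = \max\bigl(2,\ \tfrac52 + \epsilon_4(x)\bigr) = \tfrac52 + \epsilon_4(x)$ for $x$ large, giving $y (\log x)^{5/2 + \epsilon_4(x)}$; and the third is $x^{\varepsilon}$. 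This is precisely the displayed estimate. For the final ``in particular'' assertion, under $x^{2/3} e^{-\frac13 (\log x)^{1/4}} \log x \leqslant y \leqslant x e^{-(\log x)^{1/4}}$ I would show the first and third error terms are absorbed into $y (\log x)^{5/2 + \epsilon_4(x)}$: absorbing the first reduces to $x e^{-\frac12 (\log x)^{1/4}} (\log x)^{3/2 - \epsilon_4(x)} \leqslant y^{3/2}$, which follows from the lower bound on $y$ (which gives $y^{3/2} \geqslant x e^{-\frac12 (\log x)^{1/4}} (\log x)^{3/2}$) together with $\epsilon_4(x) \geqslant 0$; absorbing $x^{\varepsilon}$ is immediate since $\varepsilon \leqslant \tfrac12 < \tfrac23$ forces $x^{\varepsilon} = o(y)$.

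There is essentially no genuine obstacle here, since the entire analytic content has already been absorbed into Theorem~\ref{t1} and into the quoted asymptotic of \cite{zhai}. The only points needing (minor) care are the pointwise divisor bound $|f| \leqslant \tau_3$ --- it is what forces the specific value $m = 3$ in \eqref{e2}, and hence the exponent $\tfrac52 + \epsilon_4(x)$ --- and the verification that $a_s \neq 0$, which is what licenses the application of the theorem.
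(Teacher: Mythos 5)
Your proposal is correct and follows exactly the paper's route: identify $f=\tau\circ\id^2$ (resp. $3^{\omega}$), bound $|f|\leqslant\tau_3$ to get \eqref{e2} with $m=3$, $\delta=0$, $A=1$, quote Zhai's Main Theorem for \eqref{e1} with $s=2$, $\kappa=\tfrac12$, $\beta=4$, and plug into Theorem~\ref{t1}. Your extra checks (the prime-power comparison with $\tau_3$, the positivity of $\epsilon_4(x)$, and the absorption of the first and third error terms in the ``in particular'' range) are details the paper leaves to the reader, and they are carried out correctly.
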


\subsection{Example~3}

\noindent
Let $k \in \Z_{\geqslant 2}$ and take $f=\mu_k$. Theorem~\ref{t1} applied with the values $\delta = s = \beta = 0$, $m=1$ and $\kappa = \frac{1}{k}$ gives the following corollary.

\begin{corollary}
\label{cor4}
For $x^{1/2} e^{-\frac{1}{2}(\log x)^{1/4}} \leqslant y \leqslant xe^{-(\log x)^{1/4}}$ and any $\varepsilon \in \left( 0,\frac{1}{2} \right]$
$$\sum_{x < n \leqslant x+y} \tau_{(k)} (n) = \frac{y \log x}{\zeta(k)} + O_\varepsilon \left( x y^{-1+1/k} \, e^{(-1+1/k)(\log x)^{1/4}} + y (\log x)^{1/2+\epsilon_2(x)}  + x^\varepsilon \right).$$
In particular, if $x^{\frac{k}{2k-1}} \, e^{\frac{1-k}{2k-1} \, (\log x)^{1/4}} \leqslant y \leqslant x  e^{-(\log x)^{1/4}}$, then
$$\sum_{x < n \leqslant x+y} \tau_{(k)} (n) = \frac{y \log x}{\zeta(k)} + O \left( y (\log x)^{1/2+\epsilon_2(x)}  \right).$$
For instance, when $k=2$, we get
$$\sum_{x < n \leqslant x+y} 2^{\omega(n)} = \frac{y \log x}{\zeta(2)} + O \left( y (\log x)^{1/2+\epsilon_2(x)}  \right)$$
for all $x^{2/3} \, e^{- \frac{1}{3} \, (\log x)^{1/4}} \leqslant y \leqslant x e^{-(\log x)^{1/4}}$.
\end{corollary}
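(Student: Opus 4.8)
The plan is to obtain Corollary~\ref{cor4} as a direct application of Theorem~\ref{t1} to the function $F = \tau_{(k)}$, whose Eratosthenes transform is $f = F \star \mu = \mu_k$, since $(\mu_k \star \mathbf{1})(n) = \sum_{d \mid n} \mu_k(d)$ counts precisely the $k$-free divisors of $n$. The whole task therefore reduces to three routine steps: (i) checking that $\mu_k$ satisfies hypotheses~\eqref{e1} and~\eqref{e2} with the announced parameters; (ii) reading off the conclusion of Theorem~\ref{t1} and simplifying; and (iii) restricting the range of $y$ to absorb the first error term and $x^\varepsilon$.

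For step (i): from the identity $\mu_k(n) = \sum_{d^k \mid n} \mu(d)$ one gets, for $z \geqslant 1$,
$$\sum_{n \leqslant z} \mu_k(n) = \sum_{d \leqslant z^{1/k}} \mu(d) \left\lfloor \frac{z}{d^k} \right\rfloor = z \sum_{d \leqslant z^{1/k}} \frac{\mu(d)}{d^k} + O\!\left( z^{1/k} \right) = \frac{z}{\zeta(k)} + O\!\left( z^{1/k} \right),$$
the tail $\sum_{d > z^{1/k}} \mu(d) d^{-k}$ of the series and the errors from the floor functions each contributing $O(z^{1/k})$. This is~\eqref{e1} with $s = 0$, $a_0 = 1/\zeta(k)$, $\kappa = 1/k$ and $\beta = 0$. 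Hypothesis~\eqref{e2} is immediate, as $0 \leqslant \mu_k(n) \leqslant 1 = \tau_1(n)$, so one may take $m = 1$, $A = 1$ and $\delta = 0$.

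For step (ii): Theorem~\ref{t1} then gives a main term $\frac{y a_s}{s+1}(\log x)^{s+1} = \frac{y \log x}{\zeta(k)}$, and upon substituting $\kappa = 1/k$, $\beta = \delta = s = 0$ and $m = 1$ (so that $m+1 = 2$ and $\max\!\big(s,\delta + m - \tfrac12 + \epsilon_{m+1}(x)\big) = \tfrac12 + \epsilon_2(x)$), the error term becomes exactly $xy^{-1+1/k} e^{(-1+1/k)(\log x)^{1/4}} + y(\log x)^{1/2+\epsilon_2(x)} + x^\varepsilon$, which is the first displayed formula of the corollary. For step (iii), I would impose the stronger lower bound $y \geqslant x^{k/(2k-1)} e^{\frac{1-k}{2k-1}(\log x)^{1/4}}$: a one-line manipulation of exponents shows this is equivalent to $xy^{-1+1/k} e^{(-1+1/k)(\log x)^{1/4}} \leqslant y$, so the first error term is swallowed by $y(\log x)^{1/2+\epsilon_2(x)}$, and since $\tfrac{k}{2k-1} > \tfrac12 \geqslant \varepsilon$ the term $x^\varepsilon$ is likewise negligible against $y$. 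One must also check that this narrower range still lies inside the range $x^{1/2} e^{-\frac12(\log x)^{1/4}} \leqslant y \leqslant x e^{-(\log x)^{1/4}}$ required by Theorem~\ref{t1}; this holds because $\tfrac{k}{2k-1} \geqslant \tfrac12$ and $\tfrac{1-k}{2k-1} \geqslant -\tfrac12$, so each factor of $x^{k/(2k-1)} e^{\frac{1-k}{2k-1}(\log x)^{1/4}}$ dominates the corresponding factor of $x^{1/2} e^{-\frac12(\log x)^{1/4}}$. The statement for $2^{\omega}$ is then merely the case $k = 2$, using $\tau_{(2)} = 2^{\omega}$.

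There is no real obstacle here: the corollary is a bookkeeping exercise in matching the six parameters $(s,m,\kappa,\beta,\delta,A)$ of Theorem~\ref{t1}, together with the classical count of $k$-free integers needed for~\eqref{e1}. The only point deserving a little care is step (iii) — confirming that the restricted $y$-range used for the sharper conclusions is still compatible with the hypotheses of Theorem~\ref{t1} — and, as noted, this reduces to the two elementary inequalities on $\tfrac{k}{2k-1}$ and $\tfrac{1-k}{2k-1}$.
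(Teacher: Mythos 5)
Your proposal is correct and follows exactly the paper's route: take $f=\mu_k$ (so $F=\mu_k\star\mathbf{1}=\tau_{(k)}$), verify \eqref{e1} via the classical $k$-free count $\sum_{n\leqslant z}\mu_k(n)=z/\zeta(k)+O(z^{1/k})$ and \eqref{e2} trivially with $m=1$, $\delta=0$, then apply Theorem~\ref{t1} with $s=\beta=\delta=0$, $\kappa=1/k$ and restrict the range of $y$ to absorb the first error term and $x^\varepsilon$. The paper states precisely this parameter choice and leaves the bookkeeping implicit, so no further comment is needed.
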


\subsection{Example~4}

\noindent
Let $k \in \Z_{\geqslant 2}$ and take $f=\tau_{(k)}$. Theorem~\ref{t1} with $s=1$, $m=2$, $\delta = 0$ and $\kappa$ given below gives

\begin{corollary}
\label{cor5}
For all $x^{1/2} e^{-\frac{1}{2}(\log x)^{1/4}} \leqslant y \leqslant xe^{-(\log x)^{1/4}}$ and any $\varepsilon \in \left( 0,\frac{1}{2} \right]$ 
$$\sum_{x < n \leqslant x+y} \left( \tau \star \mu_k \right) (n) = \frac{y (\log x)^2}{2 \zeta(k)} + O_\varepsilon \left( x y^{\kappa - 1} e^{(\kappa-1)(\log x)^{1/4}} + y (\log x)^{3/2+\epsilon_3(x)} + x^\varepsilon \right)$$
where
$$\kappa := \begin{cases} 1/k, & \textrm{si\ } k \in \{2,3\} \\ \frac{131}{416} + \varepsilon, & \textrm{si\ } k \geqslant 4. \end{cases}$$
\end{corollary}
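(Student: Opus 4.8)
The plan is to obtain Corollary~\ref{cor5} as a direct application of Theorem~\ref{t1} to the function $f = \tau_{(k)}$. Since $\tau_{(k)} = \mathbf{1} \star \mu_k$, the function $F$ whose Eratosthenes transform is $f$ is $F = \tau_{(k)} \star \mathbf{1} = \tau \star \mu_k$, which is exactly the function in the statement. Hypothesis \eqref{e2} is immediate: a $k$-free divisor of $n$ is in particular a divisor of $n$, so $|f(n)| = \tau_{(k)}(n) \leqslant \tau(n) = \tau_2(n)$, and I would take $m = 2$, $A = 1$, $\delta = 0$.

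The real content is the verification of \eqref{e1} with the value of $\kappa$ announced. Here I would use the factorisation $\tau_{(k)} = \tau \star g_k$, where $g_k$ is supported on the perfect $k$-th powers with $g_k(n^k) = \mu(n)$; this drops out of the Dirichlet-series identity $\sum_n \tau_{(k)}(n) n^{-s} = \zeta(s)^2 \zeta(ks)^{-1}$ together with $\sum_n g_k(n) n^{-s} = \zeta(ks)^{-1}$. Writing $D(w) = \sum_{d \leqslant w} \tau(d) = w \log w + (2\gamma - 1) w + \Delta(w)$, this gives
$$\sum_{n \leqslant z} \tau_{(k)}(n) = \sum_{m \leqslant z^{1/k}} \mu(m) \, D\!\left( \frac{z}{m^k} \right).$$
Expanding $D$ and completing the (rapidly convergent) series $\sum_m \mu(m) m^{-k}$ and $\sum_m \mu(m)(\log m) m^{-k}$ to infinity produces the main term $z(a_1 \log z + a_0)$ with $a_1 = 1/\zeta(k) \neq 0$, so that $s = 1$; the tails contribute only $O(z^{1/k})$, up to logarithmic factors which are harmless. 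The error is then controlled by $\sum_{m \leqslant z^{1/k}} |\Delta(z/m^k)|$: for $k \in \{2,3\}$ the Dirichlet--Voronoi bound $\Delta(w) \ll w^{1/3}$ gives $z^{1/3} \sum_{m \leqslant z^{1/k}} m^{-k/3} \ll z^{1/k} (\log z)^{O(1)}$, while for $k \geqslant 4$ the series $\sum_m m^{-k\theta}$ converges with $\theta = \tfrac{131}{416} + \varepsilon$, so Huxley's bound $\Delta(w) \ll w^{\theta}$ yields $O(z^{131/416 + \varepsilon})$. This is exactly the $\kappa$ of the statement; for $k \geqslant 4$ the error already has the shape $O(z^{\kappa_0 + \varepsilon})$ covered by the comment following Theorem~\ref{t1}, which is what lets one absorb the $\varepsilon$ and drop the spurious power of $\log x$.

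Granting \eqref{e1} and \eqref{e2}, Theorem~\ref{t1} applies with $s = 1$, $m = 2$, $\delta = 0$, $a_1 = 1/\zeta(k)$, and the above $\kappa$. The main term is $\frac{y a_1}{s+1}(\log x)^{s+1} = \frac{y (\log x)^2}{2 \zeta(k)}$, and since $\max\!\left( s, \delta + m - \tfrac12 + \epsilon_{m+1}(x) \right) = \max\!\left( 1, \tfrac32 + \epsilon_3(x) \right) = \tfrac32 + \epsilon_3(x)$, the error term of Theorem~\ref{t1} reads $O_\varepsilon\!\left( x y^{\kappa - 1} e^{(\kappa - 1)(\log x)^{1/4}} + y (\log x)^{3/2 + \epsilon_3(x)} + x^\varepsilon \right)$, which is precisely the asserted bound; the admissible range for $y$ is the one already imposed in Theorem~\ref{t1}.

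I expect the verification of \eqref{e1} with the sharp exponent to be the main obstacle, and within it the bookkeeping of $\sum_{m \leqslant z^{1/k}} |\Delta(z/m^k)|$: the point to get right is that for $k \leqslant 3$ the exponent $z^{1/k}$ comes from balancing the size of a single divisor-problem error against the number of $k$-th powers below $z$ (so no sharpening of $\Delta(w) \ll w^{1/3}$ helps, and one must also take a little care to keep logarithmic losses negligible), whereas for $k \geqslant 4$ the series over $k$-th powers converges and one genuinely needs the current record bound for the Dirichlet divisor problem. Everything else --- the identity $\tau_{(k)} = \tau \star g_k$, the evaluation of $a_0$ and $a_1$, and the substitution into Theorem~\ref{t1} --- is routine.
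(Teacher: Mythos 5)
Your proposal is correct and follows essentially the same route as the paper: the paper simply invokes Theorem~\ref{t1} with $f=\tau_{(k)}$, $s=1$, $m=2$, $\delta=0$ and the stated $\kappa$, leaving the verification of \eqref{e1} (via $\tau_{(k)}=\tau\star g_k$, the hyperbola-type splitting over $k$-th powers, and the divisor-problem bounds with Huxley's exponent $\tfrac{131}{416}$ for $k\geqslant 4$) implicit, which you supply. The only caveat is cosmetic: for $k\in\{2,3\}$ your computation leaves a power of $\log z$ in the error of \eqref{e1} (so $\beta\geqslant 1$), which strictly speaking would attach a factor $(\log x)^{\beta}$ to the term $xy^{\kappa-1}e^{(\kappa-1)(\log x)^{1/4}}$ --- a factor the corollary as stated in the paper also omits, so this is not a defect of your argument relative to the paper's.
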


\subsection{Example~5}

\noindent
We use Theorem~\ref{t1} with $f=\mu^2 \times 2^\omega$. In \cite{gor}, it is shown that, for any real number $z \geqslant 1$ sufficiently large\footnote{In fact, the authors in \cite{gor} give an error term of the shape $O_\varepsilon \left(z^{1/2 + \varepsilon} \right)$, but a close inspection of their proof reveals that it can be sharpened to $\ll z^{1/2} (\log z)^6$.}
$$\sum_{n \leqslant z} \mu^2(n) 2^{\omega(n)} = A z \log z + B z + O \left( z^{1/2} (\log z)^6 \right)$$
where $A=\prod_p \left( 1 - \tfrac{1}{p} \right)^2 \left( 1 + \tfrac{2}{p} \right)$ et $B = A \left( 2 \gamma - 1 + 6 \sum_p \frac{(p-1) \log p}{p^2(p+2)} \right)$. Thus, we can apply Theorem~\ref{t1} with $s=1$, $m=2$, $\delta = 0$, $\beta = 6$ and $\kappa= \frac{1}{2}$, giving

\begin{corollary}
\label{cor6}
For any $\varepsilon \in \left( 0,\frac{1}{2} \right]$ and all $x^{1/2} e^{-\frac{1}{2}(\log x)^{1/4}} \leqslant y \leqslant x e^{-(\log x)^{1/4}}$
\begin{eqnarray*}
   \sum_{x < n \leqslant x+y} 3^{\omega(n)} &=& \tfrac{1}{2} \prod_p \left( 1 - \tfrac{1}{p} \right)^2 \left( 1 + \tfrac{2}{p} \right) \, y (\log x)^2 \\
   & & {} + O \left ( x y^{-1/2} e^{- \frac{1}{2} (\log x)^{1/4}} (\log x)^6 + y (\log x)^{3/2+\epsilon_3(x)} + x^\varepsilon \right ).
\end{eqnarray*}
\end{corollary}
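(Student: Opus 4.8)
The plan is to apply Theorem~\ref{t1} to the multiplicative function $F = 3^{\omega}$, so the first task is to identify its Eratosthenes transform. Since both sides are multiplicative, it suffices to check that $3^{\omega(p^a)} = \sum_{d \mid p^a} (\mu^2 \times 2^{\omega})(d)$ for every prime power $p^a$: the right-hand side equals $1 + 2 = 3$ for $a \geqslant 1$, which matches the left-hand side. Hence $f := F \star \mu = \mu^2 \times 2^{\omega}$, as claimed in the statement preceding the corollary.

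Next I would verify the two hypotheses of Theorem~\ref{t1}. For \eqref{e1}, the estimate of \cite{gor}, taken with the sharpened error term $O\left(z^{1/2}(\log z)^6\right)$ indicated in the footnote, has exactly the required shape with $s=1$, $a_1 = A = \prod_p \left(1-\tfrac1p\right)^2\left(1+\tfrac2p\right)$, $a_0 = B$, $\kappa = \tfrac12$ and $\beta = 6$. For \eqref{e2}, I would observe that $\mu^2(n)\, 2^{\omega(n)} \leqslant \tau(n)$ for every $n \in \Z_{\geqslant 1}$: this is an equality on squarefree integers, and the left-hand side vanishes otherwise. Thus \eqref{e2} holds with $m = 2$, $A = 1$ and $\delta = 0$.

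It then remains to feed these parameters into Theorem~\ref{t1} and simplify. The main term becomes $\frac{y a_s}{s+1}(\log x)^{s+1} = \tfrac12 A\, y (\log x)^2$, which is the stated constant. The first error term is $x y^{\kappa-1} e^{(\kappa-1)(\log x)^{1/4}}(\log x)^{\beta} = x y^{-1/2} e^{-\frac12 (\log x)^{1/4}}(\log x)^6$; the exponent in the second error term is $\max\!\left(s,\ \delta + m - \tfrac12 + \epsilon_{m+1}(x)\right) = \max\!\left(1,\ \tfrac32 + \epsilon_3(x)\right) = \tfrac32 + \epsilon_3(x)$ since $\epsilon_3(x) \geqslant 0$; and the last error term is $x^{\varepsilon}$. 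Collecting these three contributions yields the asserted formula, valid for $x^{1/2} e^{-\frac12 (\log x)^{1/4}} \leqslant y \leqslant x e^{-(\log x)^{1/4}}$.

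There is essentially no serious obstacle here, the corollary being a direct specialization of Theorem~\ref{t1}; the only points requiring care are the identification $F \star \mu = \mu^2 \times 2^{\omega}$ and, above all, the use of the refined bound $O\left(z^{1/2}(\log z)^6\right)$ in place of the $O_{\varepsilon}\left(z^{1/2+\varepsilon}\right)$ literally stated in \cite{gor}. It is precisely this sharpening that lets \eqref{e1} be invoked in its "power of a logarithm" form, so that the first error term of Theorem~\ref{t1} is not weakened by an extra factor $y^{\varepsilon}$ (compare the remark following Theorem~\ref{t1}). Re-examining the proof in \cite{gor} to confirm that its error term can indeed be tightened to $z^{1/2}(\log z)^6$ is the one genuinely non-formal step of the argument.
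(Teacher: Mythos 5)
Your proposal is correct and follows exactly the paper's route: identifying $F\star\mu=\mu^2\times 2^{\omega}$, invoking the Gordon--Rogers asymptotic with the sharpened error $O\bigl(z^{1/2}(\log z)^6\bigr)$ for \eqref{e1}, bounding $\mu^2(n)2^{\omega(n)}\leqslant\tau(n)$ for \eqref{e2}, and applying Theorem~\ref{t1} with $s=1$, $m=2$, $\delta=0$, $\beta=6$, $\kappa=\tfrac12$. Your closing remark about why the refined logarithmic error term (rather than $z^{1/2+\varepsilon}$) is needed matches the paper's footnote, so there is nothing to add.
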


\subsection{Example~6}

\noindent
The function $F$ does not need to be multiplicative. Suppose that we have at our disposal an arithmetic function $g$ such that $|g(n)| \leqslant A$, $g(1) \neq 0$ and
$$\sum_{n \leqslant z} g(n) = a z + O \left ( z^\kappa (\log z)^\beta \right) \quad \left( z \geqslant 1 \right)$$
with $a \in \C \setminus \{0\}$, $\kappa \in \left[ 0,1 \right)$, $\beta \geqslant 0$. By Abel summation, we get
$$\sum_{n \leqslant z} g(n) \log n = az(\log z - 1) + O \left( z^\kappa (\log z)^{\beta+1} \right)$$
so that Theorem~\ref{t1} may be used with $f=g \times \log$ and $m=s=\delta=1$, giving the following estimate.

\begin{corollary}
\label{cor7}
For any $\varepsilon \in \left( 0,\frac{1}{2} \right]$ and all $x^{1/2} e^{-\frac{1}{2}(\log x)^{1/4}} \leqslant y \leqslant x e^{-(\log x)^{1/4}}$
\begin{eqnarray*}
   \sum_{x < n \leqslant x+y} \Lambda_g (n) \, G(n) &=& \tfrac{1}{2} a y (\log x)^2 + O \Bigl ( xy^{\kappa-1} e^{(\kappa-1) (\log x)^{1/4}} (\log x)^{\beta+1} \\
   & & {} + y (\log x)^{3/2+\epsilon_2(x)} + x^\varepsilon \Bigr )
\end{eqnarray*}
where $G := g \star \mathbf{1}$.
\end{corollary}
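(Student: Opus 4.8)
The plan is to read this corollary as a single application of Theorem~\ref{t1} to the arithmetic function
\[
   F := (g \times \log) \star \mathbf{1},
\]
once one observes that $F$ coincides with $\Lambda_g \star G$ and that $F$ satisfies \eqref{e1} and \eqref{e2} with the announced parameters. The identification of $F$ is pure convolution bookkeeping: since $F \star \mu = g \times \log$ and, by the defining relation $\Lambda_g \star g = g \times \log$,
\[
   F = (g \times \log) \star \mathbf{1} = (\Lambda_g \star g) \star \mathbf{1} = \Lambda_g \star (g \star \mathbf{1}) = \Lambda_g \star G,
\]
so the sum appearing in the statement is to be read as $\sum_{x < n \leqslant x+y} F(n)$, that is, the sum of the Dirichlet convolution $(\Lambda_g \star G)(n)$ over the short interval (a genuinely pointwise product $\Lambda_g(n)G(n)$ would not produce a main term of order $y(\log x)^2$).

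First I would verify the two hypotheses. For \eqref{e1}: by the Abel-summation identity already recorded just above the corollary, $\sum_{n \leqslant z} (F \star \mu)(n) = \sum_{n \leqslant z} g(n) \log n = az(\log z - 1) + O\!\left(z^\kappa (\log z)^{\beta+1}\right)$, which is exactly of the shape $z \sum_{j=0}^{1} a_j (\log z)^j + O\!\left(z^\kappa (\log ez)^{\beta+1}\right)$ with $s = 1$, $a_1 = a \neq 0$, $a_0 = -a$, the given $\kappa \in [0,1)$, and the exponent $\beta+1$ in place of $\beta$. For \eqref{e2}: from $|g(n)| \leqslant A$ one gets $|(F \star \mu)(n)| = |g(n)| \log n \leqslant A \log n \leqslant A \, \tau_1(n) (\log en)^1$, so \eqref{e2} holds with $m = 1$ and $\delta = 1$ and the same constant $A$.

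Then I would substitute $(s,m,\delta) = (1,1,1)$, the given $\kappa$, and the exponent $\beta+1$ in \eqref{e1}, into Theorem~\ref{t1}. The main term becomes $\frac{y a_s}{s+1}(\log x)^{s+1} = \tfrac12 a y (\log x)^2$; in the error term the first summand is $xy^{\kappa-1} e^{(\kappa-1)(\log x)^{1/4}}(\log x)^{\beta+1}$, the exponent in the middle summand is $\max\!\left(s,\, \delta+m-\tfrac12+\epsilon_{m+1}(x)\right) = \max\!\left(1,\, \tfrac32 + \epsilon_2(x)\right) = \tfrac32 + \epsilon_2(x)$ for $x$ large, yielding $y(\log x)^{3/2+\epsilon_2(x)}$, and the last is $x^\varepsilon$; the admissible range of $y$ is inherited unchanged from Theorem~\ref{t1}. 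This gives precisely the asserted estimate. There is no genuine obstacle here beyond the bookkeeping: the only two points worth a second look are the shift from $\beta$ to $\beta+1$ in \eqref{e1}, caused by the extra logarithm in $\sum_{n \leqslant z} g(n)\log n$, and the reading of $\Lambda_g\,G$ in the statement as the Dirichlet convolution $\Lambda_g \star G$ rather than a pointwise product.
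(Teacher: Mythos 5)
Your proposal is correct and follows essentially the same route as the paper: the paper likewise applies Theorem~\ref{t1} with $f=g\times\log$, $m=s=\delta=1$ and the Abel-summation estimate $\sum_{n\leqslant z}g(n)\log n=az(\log z-1)+O\left(z^\kappa(\log z)^{\beta+1}\right)$, which forces the exponent $\beta+1$ in the first error term exactly as you note. Your remark that $\Lambda_g(n)\,G(n)$ must be read as the Dirichlet convolution $(\Lambda_g\star G)(n)=\bigl((g\times\log)\star\mathbf{1}\bigr)(n)$ is the intended interpretation and is a helpful clarification, not a deviation.
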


\noindent
For instance, when $g = \mu^2$, so that $a=\zeta(2)^{-1}$, $\kappa= \frac{1}{2}$ and $\beta = 0$, we get for all $x^{1/2} e^{-\frac{1}{2}(\log x)^{1/4}} \leqslant y \leqslant x e^{-(\log x)^{1/4}}$ and any $\varepsilon \in \left( 0,\frac{1}{2} \right]$
\begin{eqnarray*}
   \sum_{x < n \leqslant x+y} \Lambda_{\mu^2} (n) \, 2^{\omega(n)} &=& \frac{y (\log x)^2}{2 \zeta(2)} + O \Bigl ( xy^{-1/2} e^{- \frac{1}{2} (\log x)^{1/4}} \log x  \\
   & & {} + y (\log x)^{3/2+\epsilon_2(x)} + x^\varepsilon \Bigr ).
\end{eqnarray*}
In particular, if $x^{2/3} \, e^{- \frac{1}{3} \, (\log x)^{1/4}} \leqslant y \leqslant x e^{-(\log x)^{1/4}}$, then
$$\sum_{x < n \leqslant x+y} \Lambda_{\mu^2} (n) \, 2^{\omega(n)} = \frac{y (\log x)^2}{2 \zeta(2)} + O \left( y (\log x)^{3/2+\epsilon_2(x)} \right).$$

\subsection{Example~7}

\noindent
Let $k \in \Z_{\geqslant 1}$ and take $f=\log^k \star \log^k$. For any $n \in \Z_{\geqslant 1}$, we have $0 \leqslant f(n) \leqslant 4^{-k} \tau(n) (\log n)^{2k}$. Furthermore, it is known \cite{kui} that for any $\varepsilon \in \left( 0,\frac{1}{2} \right]$ and any real number $z \geqslant 1$
$$\sum_{n \leqslant z} f(n) = zP_{2k+1}(\log z) + O_\varepsilon \left( z^{1/3+\varepsilon} \right)$$
where $P_{2k+1}$ is a polynomial of degree $2k+1$ and leading coefficient $\frac{(k!)^2}{(2k+1)!}$, so that Theorem~\ref{t1} can be used with $s=2k+1$, $m=2$, $\delta = 2k$ and $\kappa = \frac{1}{3}$, giving

\begin{corollary}
\label{cor8}
For all $x^{1/2} e^{-\frac{1}{2}(\log x)^{1/4}} \leqslant y \leqslant x e^{-(\log x)^{1/4}}$, any $k \in \Z_{\geqslant 1}$ and any $\varepsilon \in \left( 0,\frac{1}{2} \right]$ 
\begin{eqnarray*}
   & & \sum_{x < n \leqslant x+y} \left( \Lambda_k \star \tau \star \log^k \right)(n) = \frac{(k!)^2 \, y (\log x)^{2k+2}}{(2k+2)!} \\
   & & {} + O_{k,\varepsilon} \biggl ( xy^{-2/3+\varepsilon}e^{\left( -\frac{2}{3} + \varepsilon \right)(\log x)^{\frac{1}{4}}} + y (\log x)^{2k+\frac{3}{2}+\epsilon_3(x)} + x^\varepsilon \biggr ).
\end{eqnarray*}
\end{corollary}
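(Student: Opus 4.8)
The plan is to recognise that the function $F := \Lambda_k \star \tau \star \log^k$ occurring in the statement is exactly the arithmetic function whose Eratosthenes transform is the prescribed $f = \log^k \star \log^k$, and then to apply Theorem~\ref{t1} to $F$ with a suitable choice of parameters.

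The first step is pure convolution bookkeeping. Since $\Lambda_k = \mu \star \log^k$ and $\tau = \mathbf{1}\star\mathbf{1}$, and since $\mu\star\mathbf{1}$ is the identity for Dirichlet convolution (so $\mu\star\mathbf{1}\star\mathbf{1}=\mathbf{1}$), one has
$$\Lambda_k \star \tau \star \log^k = (\mu\star\log^k)\star\mathbf{1}\star\mathbf{1}\star\log^k = \mathbf{1}\star\log^k\star\log^k = \mathbf{1}\star f.$$
Hence $F\star\mu = f = \log^k\star\log^k$, so $F$ sits in the framework of the paper with Eratosthenes transform $f$, and $\sum_{x<n\le x+y}(\Lambda_k\star\tau\star\log^k)(n) = \sum_{x<n\le x+y}F(n)$.

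The second step is to verify hypotheses \eqref{e1} and \eqref{e2} for $F$. Hypothesis \eqref{e1} is precisely the quoted estimate $\sum_{n\le z}f(n) = zP_{2k+1}(\log z)+O_\varepsilon(z^{1/3+\varepsilon})$ from \cite{kui}: it holds with $s = 2k+1$, with $a_j$ equal to the coefficient of $t^j$ in $P_{2k+1}(t)$ — so in particular $a_s = (k!)^2/(2k+1)!\neq 0$ — with $\kappa = 1/3$, and with an error of the sharper shape $O(z^{\kappa+\varepsilon})$. Hypothesis \eqref{e2} follows from the elementary pointwise bound: for $1\le d\le n$ one has $\log d\,\log(n/d)\le \tfrac14(\log n)^2$, whence $f(n) = \sum_{d\mid n}(\log d)^k(\log(n/d))^k\le 4^{-k}\tau(n)(\log n)^{2k}\le 4^{-k}\tau_2(n)(\log en)^{2k}$, so one may take $m = 2$, $A = 4^{-k}$ and $\delta = 2k$. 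The constraints on $y$ in Corollary~\ref{cor8} are exactly those of Theorem~\ref{t1} (the condition $e^e<y$ being automatic in the stated range for large $x$).

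The final step is to feed $s = 2k+1$, $m = 2$, $\delta = 2k$, $\kappa = 1/3$ into Theorem~\ref{t1}, invoking the variant described in the remark after the theorem (legitimate because the error in \eqref{e1} has the form $O(z^{\kappa+\varepsilon})$), which replaces the first error term by $xy^{\kappa-1+\varepsilon}e^{(\kappa-1+\varepsilon)(\log x)^{1/4}}$. Then the main term is $\frac{ya_s}{s+1}(\log x)^{s+1} = \frac{(k!)^2}{(2k+2)!}\,y(\log x)^{2k+2}$; the first error term becomes $xy^{-2/3+\varepsilon}e^{(-2/3+\varepsilon)(\log x)^{1/4}}$; since $\delta+m-\tfrac12 = 2k+\tfrac32 > 2k+1 = s$, the middle exponent $\max(s,\delta+m-\tfrac12+\epsilon_{m+1}(x))$ equals $2k+\tfrac32+\epsilon_3(x)$, giving $y(\log x)^{2k+3/2+\epsilon_3(x)}$; and the last term is $x^\varepsilon$. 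The dependence of the implied constant on $A,m,s$ and $a_0,\dots,a_s$ collapses to a dependence on $k$ and $\varepsilon$. There is no real obstacle in the argument: the only points needing care are getting the convolution identity $F\star\mu=\log^k\star\log^k$ right and checking that dividing the leading coefficient $(k!)^2/(2k+1)!$ of $P_{2k+1}$ by $s+1 = 2k+2$ reproduces the announced constant $(k!)^2/(2k+2)!$.
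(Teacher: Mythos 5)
Your proposal is correct and follows essentially the same route as the paper: identify $F\star\mu=\log^k\star\log^k$, verify \eqref{e1} via \cite{kui} and \eqref{e2} via the pointwise bound $f(n)\leqslant 4^{-k}\tau(n)(\log n)^{2k}$, then apply Theorem~\ref{t1} with $s=2k+1$, $m=2$, $\delta=2k$, $\kappa=\tfrac{1}{3}$, using the remark following the theorem to handle the $O_\varepsilon\left(z^{1/3+\varepsilon}\right)$ error term. The convolution identity, the constant $\frac{(k!)^2}{(2k+2)!}$, and the exponent $2k+\tfrac{3}{2}+\epsilon_3(x)$ are all checked exactly as the paper intends.
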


\end{document}